\long\def\symbolfootnote[#1]#2{\begingroup%
\def\thefootnote{\fnsymbol{footnote}}\footnote[#1]{#2}\endgroup}
\newcommand{\Z}{\ensuremath{\mathbb{Z}}}
\newcommand{\R}{\mathbb R}
\def \Homeo {\mathrm{Homeo}}
\def \S {\mathbb{S}}
\def \Id {\mathrm{Id}}
\def \E {\mathcal{E}}
\newtheorem{theorem}{Theorem}[section]
\newtheorem{lemma}[theorem]{Lemma}
\newtheorem{corollary}[theorem]{Corollary}
\newtheorem{proposition}[theorem]{Proposition}
\newtheorem*{theorem*}{Theorem}
\theoremstyle{definition}
\newtheorem{remark}[theorem]{Remark}
\newtheorem{definition}[theorem]{Definition}
\newtheorem{question}[theorem]{Question}
\numberwithin{equation}{section}
\newcommand{\ignore}[1]{}
\newcommand{\mynote}[1]{}
\newcommand{\secref}[1]{Section~\ref{#1}}
\newcommand{\thmref}[1]{Theorem~\ref{#1}}
\newcommand{\lemref}[1]{Lemma~\ref{#1}}
\newcommand{\remref}[1]{Remark~\ref{#1}}
\newcommand{\propref}[1]{Proposition~\ref{#1}}
\newcommand{\corref}[1]{Corollary~\ref{#1}}
\newcommand{\figref}[1]{Figure~\ref{#1}}
\newcommand{\defref}[1]{Definition~\ref{#1}}
\begin{document}
\setcounter{section}{0}
\setcounter{tocdepth}{1}
\title[Conjugacy of free mappings]{Conjugacy of free mappings embedded in a flow}
\author[Sushil Bhunia]{Sushil Bhunia}
\author[Gangotryi Sorcar]{Gangotryi Sorcar}
\address{Department of Mathematics, BITS-Pilani, Hyderabad Campus, Hyderabad, India}
\email{sushilbhunia@gmail.com}
\address{University of Delaware, Newark, DE 19716, USA}
\email{gangotryi@gmail.com}
\thanks{}
\subjclass[2020]{Primary 20E45, 37E30}
\keywords{Conjugacy, reversibility, Brouwer homeomorphisms, free mappings, foliations, leaf spaces, non-Hausdorff spaces}
\today
\begin{abstract}
In this paper we study free mappings of the plane, that is orientation preserving fixed point free homeomorphisms of $\R^2$. We provide a necessary and sufficient condition under which two free mappings of the plane that are embedded in flows are conjugate to one another using Haefliger-Reeb theory of plane foliations.
\end{abstract}
\maketitle

\section*{Introduction}
In this article we study orientation preserving fixed point free homeomorphisms of the plane that are embedded in flows and relate the algebraic property of conjugacy between two such homeomorphisms with the topological property of equivalence of the plane foliations induced by these homeomorphisms. To do this, we invoke Haefliger-Reeb theory which associates a possibly non-Hausdorff one-dimensional manifold to a plane foliation. Since $\mathbb{R}^2$ is the universal cover of all oriented surfaces other than the sphere, we think it is a fundamental object to study in order to carry out similar investigations on homeomorphism groups of surfaces in future. Conjugacy and reversibility of elements of any group $G$ are defined as follows: Two elements $f$ and $g$ in $G$ are said to be {\it conjugate} in $G$ if there is some $h$ in $G$ such that $g=hfh^{-1}$. An element $g$ in $G$ is said to be \emph{reversible} in $G$ if $g$ and $g^{-1}$ are conjugate in $G$. An element $g\in G$ is called \textit{strongly reversible} if $g^{-1}=xgx^{-1}$ for some $x\in G$ with $x^2=1$. Clearly, a strongly reversible element is reversible. Note that reversibility (resp. strongly reversibility) is a conjugacy invariant.

Orientation preserving homeomorphisms without any fixed points are called \emph{free mappings}. In \cite[Proposition 8.5]{fs15}, O'Farrell and Short prove that two free mappings $f$ and $g$ of $\mathbb{R}$ are conjugate if either $f(x)<x$ and $g(x)<x$ or $f(x)>x$ and $g(x)>x$ for all $x\in \mathbb{R}$. This nice characterization of the conjugacy of free mappings of the real line is due to the fact that the order topology and the usual topology on the real line are equivalent. Consequently, a free mapping $h$ of the real line either pushes all the points to the right ($h(x)>x$ for any real number $x$) or it pushes all the points to the left ($h(x)<x$ for any real number $x$). This property has been used crucially in the methods of O'Farrell and Short in \cite[Chapter 8]{fs15} and \cite{fs55}. In fact, they were able to obtain similar results for the group of 
homeomorphisms on $\mathbb{S}^1$ since $\mathbb{S}^1$ inherits its topology from $\mathbb{R}$ (for details, see \cite{goas09} and \cite[Chapter 9]{fs15}).
However, in the context of free mappings of the plane the methods stated above no longer apply since we do not have an ordering on $\mathbb{R}^2$ that is compatible with the usual topology of $\mathbb{R}^2$. So we look at the situation from a different perspective. The information of whether $h(x)>x$ or $h(x)<x$ for any real number $x$ can also be encoded by assigning an orientation on $\mathbb{R}$, which in turn can be regarded as an oriented codimension-$0$ foliation of $\mathbb{R}$ (think of this as an arrow on the real line indicating which way $h$ is pushing the points on it). This seemingly complicated way of looking at a simple matter, generalizes to an efficient set up in $\mathbb{R}^2$. With each free mapping $h$ of $\mathbb{R}^2$ that is embedded in a flow (Definition \ref{embeddable}), we can associate an oriented codimension-$1$ foliation $F(h)$ of $\mathbb{R}^2$. In \secref{prel}, we explain this in detail along with the notion of equivalence of two oriented codimension-$1$ plane foliations (\defref{foliationequiv}). From here on, we will simply say foliation instead of codimension-$1$ foliation. Each free mapping of $\R^2$ also partitions $\R^2$ into equivalence classes called {\it fundamental regions} (for details see \secref{freemap}). We can now state our main theorem, which establishes a topological characterization of when two free mappings of $\mathbb{R}^2$ are conjugate.  

\begin{theorem*}[\thmref{mainthm1}]
	Let $f$ and $g$ be two free mappings of $\mathbb{R}^2$, with finitely many fundamental regions, that are embedded in flows. Then $f$ is conjugate to $g$ or $f$ is conjugate to $g^{-1}$ in $\Homeo^+(\mathbb{R}^2)$ if and only if their corresponding oriented plane foliations are equivalent.
\end{theorem*}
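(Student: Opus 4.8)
The plan is to prove the two implications separately. The implication ``conjugate $\Rightarrow$ equivalent foliations'' is essentially formal, while the converse is the substantial part and its hard step is a gluing argument.

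\emph{Necessity.} This direction is routine. Suppose $g=hfh^{-1}$ with $h\in\Homeo^+(\R^2)$, and fix a flow $(\phi_t)$ with $\phi_1=f$, so that $F(f)$ is the oriented orbit foliation of $(\phi_t)$. Then $(h\phi_t h^{-1})$ is a flow with time-one map $g$, and $h$ carries the oriented orbit foliation of $(\phi_t)$ onto that of $(h\phi_t h^{-1})$; since $h$ preserves the orientation of $\R^2$, this exhibits $F(f)\simeq F(g)$. If instead $f$ is conjugate to $g^{-1}$, the same computation gives $F(f)\simeq F(g^{-1})$; but $g^{-1}$ is the time-one map of the reversed flow $(\phi_{-t})$, so $F(g^{-1})$ has the same leaves as $F(g)$ with only the orienting vector field reversed, which by \defref{foliationequiv} does not change the equivalence class. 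In either case $F(f)\simeq F(g)$.

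\emph{Sufficiency.} Assume $F(f)\simeq F(g)$. Here I would invoke Haefliger--Reeb theory: a foliation of $\R^2$ is determined up to equivalence by its (simply connected, possibly non-Hausdorff) leaf space, and an equivalence of leaf spaces is realized by an orientation-preserving homeomorphism $H$ of $\R^2$ carrying $F(f)$ onto $F(g)$. Replacing $g$ by $H^{-1}gH$ --- and then by $g^{-1}$ should the two flow directions fail to agree --- reduces us to the case where $f$ and $g$ are the time-one maps of flows $(\phi_t)$ and $(\psi_t)$ with one and the same oriented orbit foliation $\mathcal F$, which has finitely many fundamental regions. It then suffices to show that such $\phi_1$ and $\psi_1$ are conjugate in $\Homeo^+(\R^2)$. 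On a single fundamental region $U$ this is easy: by the description of fundamental regions in \secref{freemap}, $\mathcal F|_U$ is the trivial foliation by parallel lines, and measuring leaf-by-leaf the time required to flow from a base point chosen on a transversal gives a homeomorphism $U\to\R^2$ conjugating the flow to $(x,y)\mapsto(x,y+t)$ --- this is exactly the one-dimensional input behind \cite[Proposition 8.5]{fs15}, applied uniformly over the leaves. Composing the conjugacies so obtained for $(\phi_t)$ and $(\psi_t)$ produces, on each of the finitely many fundamental regions $U_i$, a homeomorphism $h_i\in\Homeo^+(U_i)$ with $h_i\,\phi_1|_{U_i}\,h_i^{-1}=\psi_1|_{U_i}$.

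The main obstacle I anticipate is to assemble the $h_i$ into one homeomorphism $h$ of $\R^2$ with $hfh^{-1}=g$. The fundamental regions meet along the leaves that are non-separated in the leaf space, and the finiteness hypothesis ensures there are only finitely many such leaves; the base points --- equivalently the local conjugacies $h_i$ --- must be chosen coherently across each separating leaf so that the pieces agree there and patch to a genuine homeomorphism of $\R^2$. I would organize this as an induction on the number of fundamental regions: choose one whose closure meets the non-separated locus in a single leaf, first standardize the flows on a saturated collar of that leaf so that the local conjugacy extends across it, then apply the inductive hypothesis to the complementary saturated piece and reconcile the two conjugacies on the overlap. Producing such an $h$ yields $hfh^{-1}=g$, and together with the possible passage to $g^{-1}$ in the reduction step this shows $f$ is conjugate to $g$ or to $g^{-1}$ in $\Homeo^+(\R^2)$.
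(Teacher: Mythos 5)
Your necessity direction is fine and is essentially the paper's own argument: a conjugating homeomorphism carries the flow $\{\bar f^\rho\}$ to the flow $\{h\bar f^\rho h^{-1}\}$ in which $g$ embeds, hence carries oriented flowlines to oriented flowlines. (The one loose end, present in your write-up, is the claim that reversing the orientation of every leaf of $F(g)$ ``by \defref{foliationequiv} does not change the equivalence class'' --- the definition says no such thing; $F(g)\simeq F(g^{-1})$ requires exhibiting a homeomorphism of $\R^2$ preserving the leaves while reversing their orientations.)

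The sufficiency direction is where your proposal departs from the paper and where it has a genuine gap. You reduce to two flows with the same oriented orbit foliation, conjugate them fundamental region by fundamental region, and then propose to glue; but the gluing across the non-separated boundary leaves is exactly the hard content of the theorem, and you only sketch it. It is not a routine patching problem: each per-region conjugacy depends on a choice of transversal and base points, the ``time to flow from the transversal'' coordinate degenerates as one approaches a boundary leaf of a two-dimensional region, and the choices in adjacent regions must be made compatible simultaneously along every shared non-separated leaf (a single region can abut several others, as in $V_{F_4}$ of \figref{figure2}). The Reeb example shows how delicate this is: each of its three regions is individually conjugate to a translation, yet the global matching forces an orientation reversal, which is precisely why the Reeb flow is reversible in $\Homeo(\R^2)$ but not in $\Homeo^+(\R^2)$. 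The paper avoids the gluing altogether: it shows $\R^2/\langle f\rangle$ is an $\S^1$-bundle over the leaf space $V_f$ (\lemref{fiberbundle}), that $V_f$ is contractible under the finiteness hypothesis (\lemref{contractable}), hence $\R^2/\langle f\rangle\simeq V_f\times\S^1\simeq V_g\times\S^1\simeq\R^2/\langle g\rangle$, and then lifts this homeomorphism through the covering-space actions of $\langle f\rangle$ and $\langle g\rangle$ (\lemref{cover}, \propref{homeoconj}) to obtain a conjugacy with $g$ or $g^{-1}$; the covering-space step is also where the orientation bookkeeping you assert without proof (``$H$ orientation-preserving'') is actually carried out, by composing with an orientation-reversing self-map of the $\S^1$ factor. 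To complete your route you would have to execute the collar-and-induction argument in full, which is a substantial piece of work your proposal does not contain.
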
 
\noindent
\emph{Idea of the proof:} Since $f$ and $g$ are embedded in flows, they induce two oriented plane foliations $F(f)$ and $F(g)$ respectively (see \propref{freetofoliation}). If $F(f)$ and $F(g)$ are equivalent, then the corresponding leaf spaces $V_f$ and $V_g$ are homeomorphic by Haefliger-Reeb theory (see \lemref{equivfoliations}). Therefore $V_f\times\mathbb{S}^1$ and $V_g\times\mathbb{S}^1$ are homeomorphic as well. Given any free mapping $f$ embedded in a flow, we show that $\mathbb{R}^2/\langle f\rangle$ is a  fiber bundle over the leaf space $V_f$ with $\mathbb{S}^1$ fibers (see \lemref{fiberbundle}). Furthermore, we show that $V_f$ is contractible (see \lemref{contractable}) making this a trivial fiber bundle, i.e., $\mathbb{R}^2/\langle f\rangle$ is homeomorphic to $V_f\times \mathbb{S}^1$. Now since $\mathbb{R}^2/\langle f\rangle$ is homeomorphic to $\mathbb{R}^2/\langle g\rangle$, using covering space argument we show that $f$ is conjugate to $g$ or $g^{-1}$ (see \propref{homeoconj}). For the converse, we show that if $g=hfh^{-1}$ for some $h$ in $\Homeo^{+}(\R^2)$, then $h$ transports the leaves of $F(f)$ to the leaves of $F(g)$.

\section{Preliminaries}\label{prel}
In this section we fix some notations and terminology which will be used throughout this article. Let $\Homeo(\mathbb{R}^2)$ denote the group (under composition) of all homeomorphisms from $\R^2$ to itself with the identity homeomorphism $\Id$. It has an index two normal subgroup (denoted by $\Homeo^+(\mathbb{R}^2)$) consisting only of the orientation preserving homeomorphisms. For any $f$ in $\Homeo(\R^2)$, we denote the fixed point set of $f$ as $\mathrm{Fix}(f):=\{x\in \R^2 : f(x)=x\}$. An element $f$ in $\Homeo^+(\R^2)$ is called a \emph{free mapping} if $\mathrm{Fix}(f)=\emptyset$ (i.e., it is fixed point free). Free mappings are also known as Brouwer homeomorphisms. Throughout, we will use $\langle f \rangle$ to denote the group generated by $f$ and $\simeq$ to replace the phrase ``is homeomorphic to".

\subsection{Free mappings}\label{freemap}
To explore some properties of free mappings, we will follow a nice treatise on the topic by Andrea \cite{andrea67}. For easiness of reading, we will reiterate some of the basic definitions and results from \cite{andrea67}.
The very first observation, due to Brouwer, is that any point $x$ on the plane diverges on repeated applications of a free mapping $f$ and its inverse. That is, $f^n(x) \rightarrow \infty$ as $n\to \pm\infty$ for any $x\in \mathbb{R}^2$. Andrea gives an equivalence relation on the plane by relating points $x$ and $y$ that ``diverge together". They call this equivalence relation {\emph {codivergence}}. To explain this properly, we need to define the divergence of a subset $A$ of $\mathbb{R}^2$. We say  a subset $A$ of $\mathbb{R}^2$ diverges and write $f^n(A)\rightarrow \infty$ as $n \to \pm\infty$, if and only if for all compact sets $K\;(\subset\R^2)$ one has $f^n(A)\cap K=\emptyset$ for all but finitely many $n$. Now, let $f$ be a free mapping and $x, y\in \R^2$. We say that $x$ and $y$ are \emph{codivergent} (with respect to $f$), denoted by $x\sim_{f} y$, if there exists 
 a curve $\gamma$ joining $x$ and $y$ such that   $f^n(\gamma)\rightarrow \infty$ as $ n\rightarrow \pm \infty$. 
Note that $\sim_{f}$ is an equivalence relation on $\R^2$.  
The equivalence classes, denoted by $\{x\}_f$, are called the \textit{fundamental regions} of $f$. For topological properties of the fundamental regions the reader may refer to \cite{jones72}. Note that if $f$ is a free mapping, then for any $h$ in $\Homeo(\R^2)$, $hfh^{-1}$ is also a free mapping because $\mathrm{Fix}(hfh^{-1})=h(\mathrm{Fix}(f))$.  

\begin{definition}\label{embeddable}\cite{andrea67}
    A free mapping $f$ is said to be \emph{embedded in the flow} $\{\bar{f}^\rho\}$ if:
    \begin{enumerate}
        \item For every real number $\rho$, $\bar{f}^\rho$ is a homeomorphism of $\R^2$.
        \item The function $\bar{f}^\rho(p)$ is jointly continuous in the variables $\rho$ and $p$.
        \item $\bar{f}^\alpha(\bar{f}^\beta(p))=\bar{f}^{\alpha+\beta}(p)$; and
        \item the homeomorphism $\bar{f}^1$ in the flow is the same as our free mapping $f$.
    \end{enumerate}
\end{definition}

\begin{remark}\label{properflowline}
    Due to conditions (4) and (3) in the above definition setting $\alpha=1$ $\beta=0$ we can deduce that $\bar{f}^0$ is the identity homeomorphism. Then we can further deduce that $\bar{f}^n=f^n$ for all integers $n$. From \cite[Proposition 2.1]{andrea67} we obtain a few important facts about flows that will be useful for us. First we see that the homeomorphism $\bar{f}^\rho$ has no fixed points in $\R^2$ if $\rho\ne0$. Next, for any $x$ in $\R^2$ if we denote $L_x^{\bar{f}}= \{\bar{f}^\rho(x): \rho \in \R\}$, we can see that  $L_x^{\bar{f}}$ is invariant under $f$, $L_x^{\bar{f}}$ is a homeomorphic image of $\mathbb{R}$ in $\mathbb{R}^2$ passing through $x$, and $L_x^{\bar{f}}\cup \{\infty\}$ is a Jordan curve on the sphere $\R^2\cup\{\infty\}$. Moreover, for any $x, y$ in $\R^2$, $L_x^{\bar{f}}$ and $L_y^{\bar{f}}$ are either disjoint or equal. Now since each $L_x^{\bar{f}}$ is homeomorphic to $\R$, there is an ordering on $L_x^{\bar{f}}$ and for any two points $x_1$ and $x_2$ on $L_x^{\bar{f}}$ we will denote $x_1<x_2$ if $x_1$ comes before $x_2$ according to that ordering. One can check that $\bar{f}^{\rho}(x)<\bar{f}^{\lambda}(x)$ whenever $\rho<\lambda$. Below when we write {\emph {$\bar{f}$ is order preserving on $L_x^{\bar{f}}$}}, we are referring to this property.
\end{remark}

\begin{lemma}\label{darlinglemma}
Let $f$ be a free mapping embedded in two flows $\{\bar{g}^\rho\}$ and $\{\bar{f}^\rho\}$. Then we have $L_x^{\bar{g}}=L_x^{\bar{f}}$ for all $x\in \R^2$.
\end{lemma}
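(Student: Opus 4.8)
The plan is to reduce the statement to a containment and then to a Jordan-curve argument. Since $\bar f^{n} = f^{n} = \bar g^{n}$ for every integer $n$ (Remark \ref{properflowline}), the full orbit $O := \{f^{n}(x) : n \in \Z\}$ lies on both $L_x^{\bar f}$ and $L_x^{\bar g}$; moreover $f$ acts on $L_x^{\bar f}$ as an order-preserving fixed-point-free self-homeomorphism of a line, so $O$ is unbounded in both directions of $L_x^{\bar f}$ (and likewise in $L_x^{\bar g}$). Hence it suffices to prove the inclusion $L_x^{\bar g} \subseteq L_x^{\bar f}$: once that is known, $L_x^{\bar g}$ is a connected subset of $L_x^{\bar f} \simeq \R$ that contains the set $O$, which is unbounded in both directions of $L_x^{\bar f}$, so $L_x^{\bar g}$ must be all of $L_x^{\bar f}$. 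Thus I would aim to show that $\bar g^{\rho}(x) \in L_x^{\bar f}$ for every $\rho \in \R$.

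For this I would set $T := \{\rho \in \R : \bar g^{\rho}(x) \in L_x^{\bar f}\}$. Because $L_x^{\bar f} \cup \{\infty\}$ is a Jordan curve on the sphere $\R^2 \cup \{\infty\}$, the set $L_x^{\bar f}$ is closed in $\R^2$, so $T$ is closed; and $T \supseteq \Z$ by the previous paragraph. If $T \neq \R$, then $\R \setminus T$ has a bounded component $(a,b)$ with $a,b \in T$ and $b - a \le 1$ (no integer lies in $(a,b)$), and after replacing $x$ by a suitable iterate $f^{m}(x)$ we may assume $0 \le a < b \le 1$. The flow arc $A := \{\bar g^{\rho}(x) : a \le \rho \le b\}$ is then a genuine arc, since $\rho \mapsto \bar g^{\rho}(x)$ is injective (the maps $\bar g^{\rho}$ are fixed-point-free for $\rho \neq 0$), with distinct endpoints $p := \bar g^{a}(x)$ and $q := \bar g^{b}(x)$ both lying on $L_x^{\bar f}$, while its interior is a connected set disjoint from $L_x^{\bar f}$ and hence contained in one component $U$ of $\R^2 \setminus L_x^{\bar f}$. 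Taking $B$ to be the sub-arc of $L_x^{\bar f}$ joining $p$ to $q$, the union $C := A \cup B$ is a Jordan curve in $\R^2$; let $D$ be the closed disk it bounds, so that $D \cap L_x^{\bar f} = B$ and $D \setminus B \subseteq U$.

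The remaining step --- ruling out the disk $D$ --- is where I expect the real difficulty to lie, and it is the main obstacle. The route I would try is a local analysis at the common points $p$ and $q$: since $\bar f$ and $\bar g$ are flows without rest points, each of $L_x^{\bar f}$ and $L_x^{\bar g}$ has a flow-box neighborhood at every one of its points, and one wants to use the way $L_x^{\bar g}$ crosses or meets $L_x^{\bar f}$ at $p$ and at $q$, together with the facts that $f$ carries flow lines of \emph{both} flows to flow lines of the same flow and has no periodic points, to conclude that the $\bar g$-orbit of $x$ cannot leave $L_x^{\bar f}$ and subsequently return to it --- equivalently, that $T$ is open as well as closed, and hence equal to $\R$. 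Making this dichotomy precise --- that a flow orbit of one flow cannot bound a disk against a flow line of the other --- is the crux; everything else reduces to routine point-set topology in the plane and on the sphere.
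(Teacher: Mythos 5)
Your reduction to the single inclusion $L_x^{\bar g}\subseteq L_x^{\bar f}$ and the point-set setup (closedness of $T$, $T\supseteq \Z$, the bounded gap $(a,b)$, the Jordan curve $C=A\cup B$) are all correct, but the argument stops exactly where the mathematical content begins: you never derive a contradiction from the disk $D$, and you say so yourself. The step you defer --- ``a flow orbit of one flow cannot bound a disk against a flow line of the other,'' equivalently that $T$ is open --- is essentially the whole lemma, so as written the proposal is a reformulation rather than a proof. The local route you sketch for closing it is also doubtful: these are only continuous flows, so there is no transversality to exploit at $p$ and $q$; the arc $A$ could meet $L_x^{\bar f}$ tangentially or oscillate near $p$, and a flow-box chart for $\bar f$ at $p$ says nothing about how a $\bar g$-orbit sits inside it. This is a genuine gap, not a routine verification left to the reader.

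The paper's proof avoids the difficulty entirely and never follows a single $\bar g$-orbit. It first reduces (using $f$-invariance of $L_x^{\bar g}$, since $f=\bar g^1$, plus symmetry in $\bar f$ and $\bar g$) to showing that the arc $[x,f(x)]$ of $L_x^{\bar f}$ lies in $L_x^{\bar g}$. If some $z$ in this arc had $L_z^{\bar g}\neq L_x^{\bar g}$, then for each $y\in[x,f(x)]$ the flowline $L_y^{\bar g}$ passes through both $y$ and $\bar g^1(y)=\bar f^1(y)=f(y)$, and order-preservation of $\bar f$ on $L_x^{\bar f}$ forces $L_y^{\bar g}$ to meet $L_x^{\bar g}$ or $L_z^{\bar g}$; it cannot meet both, since distinct $\bar g$-flowlines are disjoint. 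The two resulting sets are $L_x^{\bar g}\cap[x,f(x)]$ and $L_z^{\bar g}\cap[x,f(x)]$, hence closed, and they partition the connected arc $[x,f(x)]$ --- a contradiction. To salvage your approach you would need an independent proof that a $\bar g$-orbit cannot leave and return to a flowline of $\bar f$; the paper's device of sweeping the whole arc $[x,f(x)]$ by $\bar g$-flowlines and invoking connectedness is the idea your proposal is missing.
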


\begin{proof}
    Denote the portion of $L_x^{\bar{f}}$ between $x$ and $f(x)$ including both $x$ and $f(x)$ as $[x,f(x)]$. It is sufficient to prove that $[x,f(x)]\subset L_x^{\bar{g}}$. By way of contradiction, assume that there is some $z$ in $[x,f(x)]$ such that $z$ is not in $L_x^{\bar{g}}$, that is $L_x^{\bar{g}}\ne L_z^{\bar{g}}$. Consider the sets $A=\{y \in [x,f(x)]: L_y^{\bar{g}}\cap L_z^{\bar{g}}\ne \emptyset\}$ and $B=\{y \in [x,f(x)]: L_y^{\bar{g}}\cap L_x^{\bar{g}}\ne \emptyset\}$.  For any $y$ in $[x,f(x)]$ the curve $L_y^{\bar{g}}$ must intersect $L_x^{\bar{g}}$ or $L_z^{\bar{g}}$ since $\bar{f}$ is order preserving on $L_x^{\bar{f}}$ and $\bar{f}(y)=\bar{g}(y)$. Therefore, we have $A\cup B=[x,f(x)]$. For any $y$ in $[x,f(x)]$ the curve $L_y^{\bar{g}}$ cannot intersect both $L_x^{\bar{g}}$ and $L_z^{\bar{g}}$ since $L_x^{\bar{g}}\ne L_z^{\bar{g}}$. Therefore, we also have $A\cap B=\emptyset$. Now by definition $A=L_z^{\bar{g}}\cap [x,f(x)]$ and $B=L_x^{\bar{g}}\cap [x,f(x)]$, which implies $A$ and $B$ are closed sets. Since $[x,f(x)]$ is connected, we have arrived at a contradiction. 
\end{proof}

\begin{definition}
    For any free mapping $f$ embedded in a flow $\{\bar{f}^\rho\}$ we define $L_x^f=\{\bar{f}^\rho(x): \rho \in \R\}$ to be the \emph{flowline of $f$ passing through $x$}. This definition is valid because by the lemma above, $L_x^f$ is independent of the flow in which $f$ is embedded. 
\end{definition}
\subsection{Foliations of $\mathbb{R}^2$}
\begin{definition}\cite[Definition 1]{lawson74}
    A \emph{foliation} $F$ of $\mathbb{R}^2$ is a decomposition of $\mathbb{R}^2$ into a union of disjoint connected subsets $\{L_a\}_{a\in A}$, called the leaves of the foliation, with the following property: for every point $p$ in $\mathbb{R}^2$ there is a neighborhood $U$ containing $p$ and a homeomorphism $x=(x_1,x_2):U\rightarrow \mathbb{R}^2$ such that for each leaf $L_a$, the coordinate function $x_2$ restricted to $U\cap L_a$ is a constant depending on $a$.
\end{definition}

 Note that any foliation of $\R^2$ can be oriented, refer to \cite[pages 15-16]{s22} for details. One can think of this as putting compatible arrows on each leaf of the foliation. Below are some pictures of oriented foliations of $\R^2$.
 
\begin{figure}[H]
\begin{center}
    \includegraphics[width=150mm,scale=3.5]{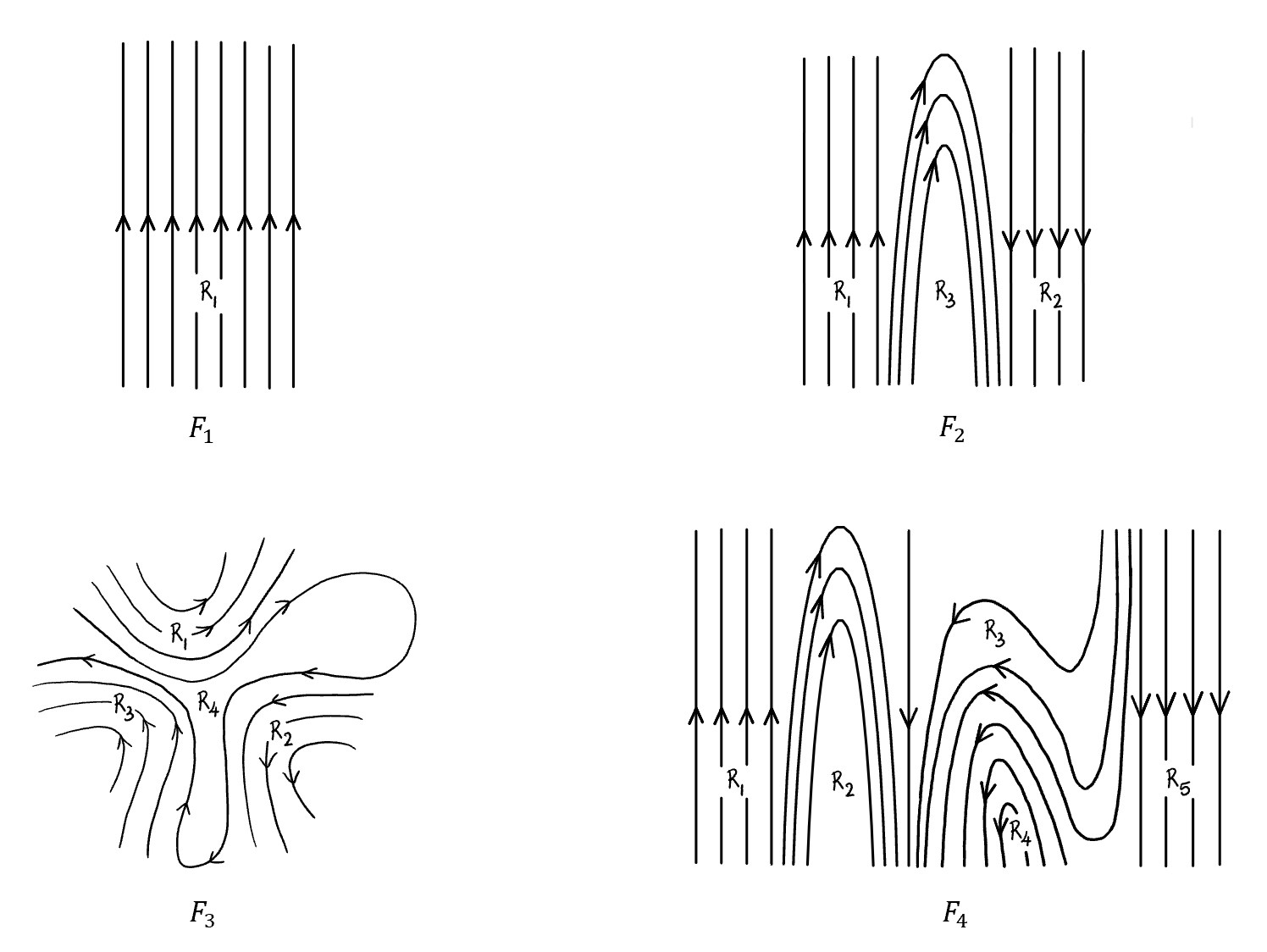}
    \caption{Oriented foliations}
    \end{center}
\end{figure}

\begin{definition}\label{foliationequiv}
    Two oriented foliations $F$ and $F'$ are said to be \emph{equivalent} if there is a homeomorphism of $\R^2$ that transports the oriented leaves of $F$ to those of $F'$.
\end{definition}

For any foliation $F$ of $\R^2$, define a relation $\approx_F$ on $\mathbb{R}^2$ such that for any $x, y \in \R^2$, $x \approx_F y$ if and only if $x$ and $y$ are in the same leaf of $F$. This is an equivalence relation on $\R^2$ and the equivalence class of any $x$ in $\R^2$ is denoted by $[x]_{\approx_F}$.

\begin{definition}\label{leafspace}
    Given any foliation $F$, the quotient space $\R^2/\approx_F$ is called the \emph{leaf space} of $F$ and is denoted by $V_F$.
\end{definition}
We are stating the following proposition without proof from \cite{hr57} or \cite[Proposition 3]{s22}:

\begin{proposition}\label{foltoleaf}
    Let $F$ be a foliation of $\R^2$. The leaf space $V_F$ is a $1$-dimensional simply connected manifold, not necessarily Hausdorff, with countable basis. 
\end{proposition}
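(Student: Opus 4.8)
\noindent\emph{Proof proposal.} The plan is to read off the manifold structure and second countability of $V_F$ from soft properties of the quotient map $\pi\colon\R^2\to V_F$, and then to deduce simple connectedness by lifting loops through $\pi$; the one genuinely nonformal ingredient is a classical separation property of plane foliations, which is exactly where the hypothesis ``$\R^2$'' (rather than an arbitrary foliated surface) enters.

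First I would record that $\pi$ is an \emph{open surjection}: the $\approx_F$-saturation of an open set is open in $\R^2$, since any point of a leaf meeting an open set $U$ is reached from $U$ by sliding through a finite chain of foliation charts, and sliding along plaques is a local homeomorphism. Now for the manifold structure. Fix $p\in\R^2$ on a leaf $L$, a foliation chart $x=(x_1,x_2)\colon\mathcal U\to\R^2$ about $p$, and the transverse arc $\tau_p:=\{x_1=x_1(p)\}\subset\mathcal U$ (an interval, hence $\simeq\R$). The map $\pi|_{\tau_p}$ is continuous, and it is open onto $V_F$ because the saturation of an open subset of $\tau_p$ is open in $\R^2$. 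The point on which everything turns is that $\pi|_{\tau_p}$ is \emph{injective}, i.e.\ that no leaf meets $\tau_p$ twice: if a leaf met $\tau_p$ in two distinct points, a sub-arc of that leaf together with a sub-arc of $\tau_p$ would bound an embedded disk $D\subset\R^2$ whose boundary is tangent to the foliation along one arc and transverse along the complementary arc, and a short argument with the index of the foliation on $D$ (equivalently, a nearby closed transversal, which cannot bound a disk in a foliated surface) gives a contradiction. This is the classical no-closed-transversal property of foliations of $\R^2$ --- equivalently, every leaf is a properly embedded line separating $\R^2$; see \cite{hr57} or \cite{s22}. Granting it, each $\pi|_{\tau_p}$ is a homeomorphism onto an open subset of $V_F$ homeomorphic to an interval, these sets cover $V_F$ (every point of every leaf lies on such a transverse arc), and the transition maps are continuous as compositions of the chart homeomorphisms. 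Hence $V_F$ is a topological $1$-manifold, not assumed Hausdorff.

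Second countability is then immediate: if $\{B_i\}_{i\in\mathbb N}$ is a countable basis of $\R^2$, then $\{\pi(B_i)\}_{i\in\mathbb N}$ is a countable basis of $V_F$, since $\pi$ is open and, given an open $W\subset V_F$ and $v\in W$, a choice of $x\in\pi^{-1}(v)$ and of $B_i$ with $x\in B_i\subset\pi^{-1}(W)$ yields $v\in\pi(B_i)\subset W$. For simple connectedness, $V_F$ is path-connected as the continuous image of $\R^2$, so it remains to kill $\pi_1$, and here I would lift and splice. Let $\gamma\colon[0,1]\to V_F$ be a loop at $v_0=\pi(x_0)$. By compactness cover $\gamma([0,1])$ by finitely many chart-images $\pi(\tau_p)$ and choose a subdivision $0=t_0<\cdots<t_N=1$ so that each $\gamma([t_k,t_{k+1}])$ lies in one such set; over it $\pi$ has the section $\tau_p$, so $\gamma|_{[t_k,t_{k+1}]}$ lifts to a path $\sigma_k$ in $\R^2$. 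Consecutive endpoints $\sigma_{k-1}(t_k)$ and $\sigma_k(t_k)$ lie on the common leaf $\pi^{-1}(\gamma(t_k))$, which is path-connected, so join them by an arc $\delta_k$ inside that leaf (and likewise join the terminal and initial endpoints inside $\pi^{-1}(v_0)$). The concatenation $\widehat\gamma=\sigma_0*\delta_1*\sigma_1*\cdots*\sigma_{N-1}*\delta_N$ is a loop in $\R^2$, and since $\pi$ collapses each leaf-arc $\delta_k$ to a point, $\pi\circ\widehat\gamma$ is $\gamma$ with finitely many constant segments inserted, hence homotopic to $\gamma$ rel $v_0$. As $\R^2$ is simply connected, $\widehat\gamma$ is null-homotopic, and pushing the null-homotopy forward by $\pi$ shows $\gamma$ is null-homotopic; thus $\pi_1(V_F)=0$.

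The step I expect to be the crux is the injectivity/no-closed-transversal fact used for the manifold structure: once it (equivalently, that leaves of a plane foliation are properly embedded separating lines) is in hand, the $1$-manifold structure, second countability, and --- via the lifting-and-splicing argument --- simple connectedness are all formal. An alternative route to simple connectedness, exhausting $\R^2$ by disks $D_n$ and showing each $\pi(D_n)$ is a tree because every leaf separates $\R^2$, rests on the same separation property and is less clean, since the fibers $L\cap D_n$ of $\pi|_{D_n}$ can be disconnected.
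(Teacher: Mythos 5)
The paper offers no proof of this proposition---it is quoted without proof from \cite{hr57} and \cite[Proposition 3]{s22}---so your write-up can only be measured against the classical Haefliger--Reeb argument, which it reconstructs correctly and in essentially the standard way: transverse arcs $\tau_p$ give charts because the quotient map is open and no leaf meets a transversal twice, openness plus second countability of $\R^2$ gives a countable basis, and the lift-and-splice argument (using that fibers of $\pi$ are connected leaves) kills $\pi_1$. Your identification of the crux is also accurate: the only non-formal step is the no-closed-transversal property (a leaf meeting a transversal twice would yield a closed transversal bounding a disk, contradicting the absence of singularities), which you correctly attribute to \cite{hr57}; everything else is routine once that is granted.
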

\begin{definition}\label{branchpts}
 A point $x$ in a manifold $M$ is said to be a \emph{branch point} if there is a point $y\in M$ ($x\neq y$) such that any neighborhood of $x$ has a non-empty intersection with any neighborhood of $y$. In this case we say $x$ and $y$ are non-separable.
\end{definition}
Note that the leaf space $V_F$ can be oriented. Then one can define an order relation among non-separable branch points of $V_F$ (see \cite[Page 16]{s22}) using the orientation of the foliation. Below we provide pictures of the leaf spaces of the oriented foliations shown above with this ordering:

\begin{figure}[H]
\centering
    \includegraphics[width=150mm,scale=4.2]{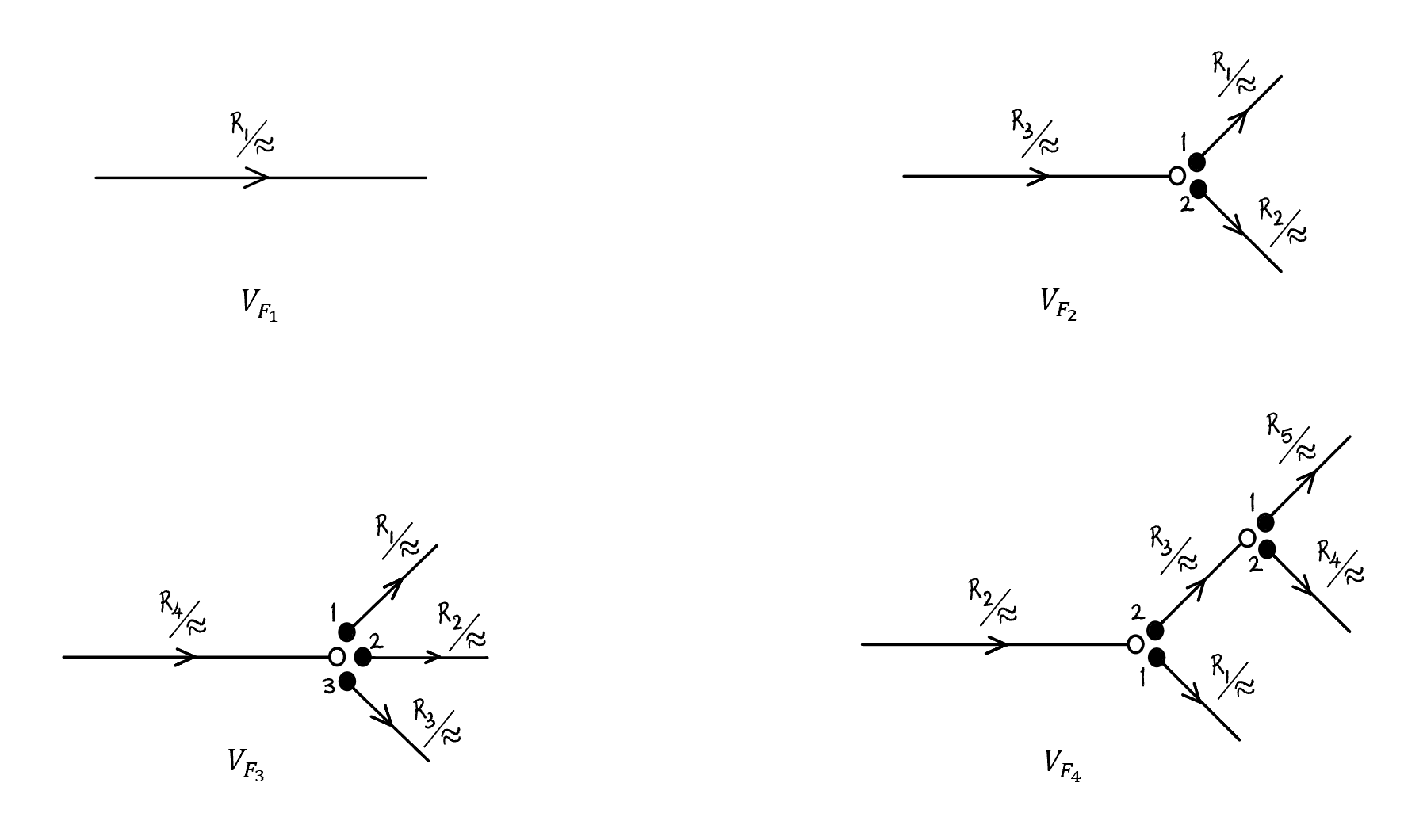}
    \caption{Leaf spaces}
    \label{figure2}
\end{figure}

Below is an important result that characterizes equivalent foliations in terms of their leaf spaces. We will use crucially in the proof of our main theorem. 
\begin{lemma}\cite[Page 16]{s22}\label{equivfoliations}
Two oriented foliations $F$ and $F'$ of $\R^2$ are equivalent if and only if there exists a homeomorphism from $V_F$ to $V_{F'}$ that preserves the ordering.
\end{lemma}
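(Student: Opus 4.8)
\emph{Proof proposal.} The plan is to handle the two implications separately. The forward one is formal; the converse is the substantive Haefliger--Reeb reconstruction and is where the real work lies.

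For the forward implication, let $h$ be a homeomorphism of $\R^2$ realizing the equivalence, so $h$ carries the oriented leaves of $F$ onto the oriented leaves of $F'$, and let $\pi_F\colon\R^2\to V_F$ and $\pi_{F'}\colon\R^2\to V_{F'}$ be the canonical projections. Since $h$ permutes leaves bijectively, it descends to a bijection $\bar h\colon V_F\to V_{F'}$ determined by $\pi_{F'}\circ h=\bar h\circ\pi_F$. As $\pi_F$ and $\pi_{F'}$ are quotient maps, $\bar h$ is continuous, and the same argument for $h^{-1}$ shows $\bar h^{-1}$ is continuous, so $\bar h$ is a homeomorphism. Since $h$ matches the leaf orientations of $F$ with those of $F'$, it respects the transverse orientation; hence $\bar h$ preserves the orientation of the $1$-manifolds $V_F,V_{F'}$ and therefore the induced order relation on non-separable points. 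This is the desired order-preserving homeomorphism.

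For the converse, let $\phi\colon V_F\to V_{F'}$ be an order-preserving homeomorphism. I would lift $\phi$ to a homeomorphism $H\colon\R^2\to\R^2$ with $\pi_{F'}\circ H=\phi\circ\pi_F$ carrying oriented leaves of $F$ to oriented leaves of $F'$; such an $H$ witnesses equivalence of $F$ and $F'$. First use the local product structure of a foliation: every point of $V_F$ has an open coordinate interval $I$ with $\pi_F^{-1}(I)\simeq I\times\R$ compatibly with the projection to $I$ and taking oriented leaves to oriented slices $\{t\}\times\R$, and likewise for $F'$ over $\phi(I)$. Choose a countable cover $\{J_k\}_{k\ge1}$ of $V_F$ by such intervals, refined so that $F$ is a product over each $J_k$ and $F'$ over each $\phi(J_k)$; over each piece this gives a model lift $H_k\colon\pi_F^{-1}(J_k)\to\pi_{F'}^{-1}(\phi(J_k))$, namely $(t,s)\mapsto(\phi(t),s)$ in product coordinates, which is leafwise, orientation-compatible, and covers $\phi$.

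The hard part is gluing the $H_k$. On an overlap $\pi_F^{-1}(J_k\cap J_l)$ the maps $H_k$ and $H_l$ differ by a leafwise self-homeomorphism of $\pi_{F'}^{-1}(\phi(J_k\cap J_l))$, i.e.\ a continuous family, parametrized by $J_k\cap J_l$, of elements of $\Homeo^+(\R)$. I would assemble $H$ by induction on $k$: given $H$ already defined on $\pi_F^{-1}(J_1\cup\cdots\cup J_n)$, leafwise, orientation-compatible and covering $\phi$, extend it over $J_{n+1}$ by replacing $H_{n+1}$ with $\Psi\circ H_{n+1}$, where $\Psi$ is a leafwise self-homeomorphism over $\phi(J_{n+1})$ that agrees, on the open set $\phi\big(J_{n+1}\cap(J_1\cup\cdots\cup J_n)\big)$, with the already-forced discrepancy $H\circ H_{n+1}^{-1}$. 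That discrepancy is a continuous map from an open subset of the interval $\phi(J_{n+1})$ into $\Homeo^+(\R)$, and it extends across the remainder of $\phi(J_{n+1})$ because $\Homeo^+(\R)$ is contractible (e.g.\ via $u\mapsto(1-u)\psi+u\,\Id$), so one may damp the family to the identity away from a slight enlargement of that open set; simple connectedness of $V_F$ is what rules out a monodromy obstruction to making these choices consistently. The genuinely delicate issue --- and the reason this is really a theorem, which I would in the end simply cite from \cite{hr57} or \cite[Page 16]{s22} --- is the non-Hausdorffness of $V_F$: near a branch point the $J_k$ accumulate in a non-separated way, and one must verify that the discrepancy families extend continuously across such points and that the inductive construction converges, using the order-preservation of $\phi$ to control the gluing near non-separable leaves. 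Granting this, the resulting $H$ is a homeomorphism of $\R^2$ taking oriented leaves of $F$ to oriented leaves of $F'$, so $F$ and $F'$ are equivalent.
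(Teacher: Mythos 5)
The paper does not actually prove this lemma --- it is quoted from Haefliger--Reeb \cite[Page 16]{s22} and used as a black box --- so there is no in-paper argument to measure you against; what follows assesses your sketch on its own terms. Your forward implication is correct and essentially complete: a leaf-preserving homeomorphism $h$ descends through the two quotient maps to a bijection $\bar h$ of leaf spaces, continuity of $\bar h$ and $\bar h^{-1}$ follows from the universal property of the quotient topology, and order-preservation follows because $h$ respects the leaf orientations. One imprecision: the order among non-separable branch points is induced by the orientation of the leaves themselves (the direction in which a nearby leaf passes the two non-separable boundary leaves), not by a transverse orientation coming from an orientation of $\R^2$; this matters because Definition~\ref{foliationequiv} does not require $h$ to preserve the orientation of $\R^2$, only to transport oriented leaves to oriented leaves. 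Your conclusion is unaffected, but the reason you give for it is not quite the right one.

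The converse is where the lemma has all of its content, and your outline --- trivialize over a countable cover of $V_F$ by intervals, lift $\phi$ piecewise, glue using contractibility of $\Homeo^+(\R)$ and simple connectedness of $V_F$ --- is the correct shape of the Haefliger--Reeb reconstruction. But you leave unproved exactly the two points that make it a theorem: (i) that the saturation $\pi_F^{-1}(I)$ of an open interval $I\subset V_F$ is a product $I\times\R$ compatibly with the projection is itself a nontrivial fact (essentially that a foliation of a simply connected surface with Hausdorff interval leaf space is trivial), and (ii) the extension and consistency of the discrepancy families across non-separable points, which is precisely where the hypothesis that $\phi$ preserves the ordering must enter --- nothing in your gluing scheme as written ever uses it, yet without it the statement cannot hold, since a homeomorphism of leaf spaces ignoring the order carries no information about how the leaf orientations sit near a branch point. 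You acknowledge both gaps and fall back on the citation, which is defensible --- it is what the paper itself does --- but it means the proposal is a proof of one implication plus a well-informed pointer for the other, not a self-contained proof of the lemma.
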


The proposition below explains how a free mapping embedded in a flow induces a foliation on the plane.

\begin{proposition}\label{freetofoliation}
    Given a free mapping $f$ embedded in a flow, its flowlines  $\{L^f_x : x\in\R^2\}$ form a foliation of $\R^2$. This foliation will be denoted as $F(f)$. 
\end{proposition}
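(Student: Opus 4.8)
The plan is to verify directly the two requirements in the definition of a foliation. That $\{L^f_x : x\in\R^2\}$ is a decomposition of $\R^2$ into disjoint connected subsets is immediate from \remref{properflowline}: each $L^f_x$ is a homeomorphic image of $\R$, hence connected, any two of them are either disjoint or equal, and $x\in L^f_x$ for every $x$. The substance of the proof is to produce, around an arbitrary point $p\in\R^2$, a foliation chart, i.e. an open neighborhood $U\ni p$ together with a homeomorphism $x=(x_1,x_2)\colon U\to\R^2$ whose second coordinate is constant along $U\cap L$ for every flowline $L$.

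Fix $p$ and choose any flow $\{\bar f^\rho\}$ in which $f$ is embedded; the flowlines do not depend on this choice by \lemref{darlinglemma}. The map $(\rho,q)\mapsto\bar f^\rho(q)$ is a continuous flow on $\R^2$ with no rest points, since by \remref{properflowline} the homeomorphism $\bar f^\rho$ is fixed point free for every $\rho\neq0$. For a continuous flow without rest points one has the classical flow-box (local section) theorem, which in the plane is part of Whitney's theory of regular families of curves (see also \cite{andrea67}): there exist $\varepsilon>0$, an arc $S\ni p$ homeomorphic to $(-1,1)$, and an open neighborhood $U$ of $p$ such that
\[
\Phi\colon(-\varepsilon,\varepsilon)\times S\longrightarrow U,\qquad \Phi(\rho,s)=\bar f^\rho(s),
\]
is a homeomorphism and, after shrinking $U$ and $S$ if necessary, each flowline meets $S$ in at most one point. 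This last property is the delicate point: a transversal that is chosen too large can be crossed by a single flowline several times (this already happens in Reeb-type configurations), so one must shrink $S$ and $U$ around $p$; that this can always be done is precisely the content of the flow-box theorem in the continuous category.

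Granting such $\Phi$ and $S$, pick a homeomorphism $\sigma\colon S\to(-1,1)$ and define $x=(x_1,x_2)\colon U\to(-\varepsilon,\varepsilon)\times(-1,1)$ by $x_1\big(\bar f^\rho(s)\big)=\rho$ and $x_2\big(\bar f^\rho(s)\big)=\sigma(s)$; this is a homeomorphism, and composing it with a product homeomorphism $(-\varepsilon,\varepsilon)\times(-1,1)\to\R^2$ turns the target into $\R^2$ without disturbing the level sets of the second coordinate. If $q_1,q_2\in U$ lie on the same flowline $L$, write $q_i=\bar f^{\rho_i}(s_i)$ with $s_i\in S$; since flowlines are flow-invariant, $s_i=\bar f^{-\rho_i}(q_i)\in L$, so $s_1,s_2\in L\cap S$ and hence $s_1=s_2$ and $x_2(q_1)=x_2(q_2)$. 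Thus $x_2$ is constant on $U\cap L$ for every flowline $L$, which is exactly the foliation-chart condition, and $\{L^f_x:x\in\R^2\}$ is a foliation of $\R^2$.

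The only non-routine ingredient here is the flow-box theorem for a \emph{merely continuous} flow: the implicit function theorem is unavailable, so the transversal $S$ must be produced by purely topological means, and the extra care needed to guarantee that it can be shrunk so as to be crossed at most once by each flowline is exactly where the real work lies. Both are classical facts about continuous planar flows without singularities, which I would invoke by citing Whitney's work and \cite{andrea67}; everything else in the argument is bookkeeping resting on \remref{properflowline}.
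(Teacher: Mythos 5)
Your argument is correct in substance but takes a genuinely different route from the paper's. The paper does not construct local sections at all: it invokes Brouwer's plane translation theorem (\thmref{brouwertrans}) to place the given point in a domain of translation $W$, passes to the $f$-saturated open set $U=\bigcup_n f^n(\overline W)$ on which $f$ is conjugate to the translation $T_{1,0}$ (\remref{conjtrans}), and uses the conjugating homeomorphism $h:U\to\R^2$ itself as the foliation chart, arguing that the image of each flowline is $T_{1,0}$-invariant and hence a horizontal line. That route produces one large chart per saturated piece and stays within Brouwer theory of the mapping $f$; yours works locally and exploits the flow structure directly through the flow-box (local section) theorem for continuous fixed-point-free flows, which makes the chart condition essentially tautological once the section exists. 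The one place where your accounting is off is the claim that the property ``each \emph{global} flowline meets $S$ at most once'' is ``precisely the content of the flow-box theorem'': it is not. The flow-box theorem only controls the orbit segments $\bar f^{\rho}(s)$ for $|\rho|<\varepsilon$, and says nothing about an orbit re-entering the box at a much later time; what excludes that is the freeness of $f$. If an orbit met the section at two consecutive points $a$ and $b$, the orbit arc from $a$ to $b$ together with the subarc of $S$ between them would bound a Jordan domain which, because all orbits cross $S$ coherently from one side to the other, traps a forward (or backward) orbit in a compact set, contradicting $f^n(x)\to\infty$. This non-recurrence step is classical (it is the heart of Whitney's and Kaplan's theory of regular curve families in the plane), so your proof stands once the citation is attached to the right fact; with that repair it is arguably more transparent at the local level than the paper's argument, which leaves implicit why a $T_{1,0}$-invariant one-dimensional submanifold must be a horizontal line.
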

The corresponding leaf space of $F(f)$ will be simply denoted by $V_f$ instead of $V_{F(f)}$. Before we prove this proposition we need to recall some facts. 

\begin{definition}\cite[Definition 2.1]{jf92}\label{domtrans}
    A \emph{domain of translation} of a free mapping $f$ is an open connected subset of $\mathbb{R}^2$ whose boundary is $L \cup f(L)$ where $L$ is the image of a proper embedding of $\mathbb{R}$ in $\R^2$ such that $L$ separates $f(L)$ and $f^{-1}(L)$.
\end{definition}

Note that in this definition $L$ is not a flowline of $f$, in fact a flowline will not satisfy the requirements of $L$ in this definition.
Now, we mention a theorem (without proof) and a remark which we will need to prove \propref{freetofoliation}.

\begin{theorem}\cite[Brouwer's Plane Translation Theorem 2.2]{jf92}\label{brouwertrans}
    Let $f$ be a free mapping of $\R^2$. Every point $x$ in $\R^2$ is contained in some domain of translation. 
\end{theorem}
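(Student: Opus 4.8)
The plan is to follow Brouwer's classical translation-arc strategy in its modern form. The central object is a \emph{translation arc}: given a point $p$, an embedded arc $\alpha$ from $p$ to $f(p)$ satisfying $\alpha\cap f(\alpha)=\{f(p)\}$. First I would establish that translation arcs exist through (or near) any prescribed point $x$. Since $\mathrm{Fix}(f)=\emptyset$ we have $f(x)\neq x$, so we may join $x$ to $f(x)$ by an embedded arc; using that $f$ is a fixed-point-free homeomorphism together with a general-position argument, one perturbs this arc so that it meets its image $f(\alpha)$ only in the common endpoint $f(x)$, yielding a translation arc.

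The crux of the proof is \textbf{Brouwer's Lemma on translation arcs} (the \emph{free} property): if $\alpha$ is a translation arc, then $f^{i}(\alpha)\cap f^{j}(\alpha)=\emptyset$ whenever $|i-j|\geq 2$, while consecutive iterates meet only in a shared endpoint. I would argue by contradiction. A first illegal intersection produces a finite chain $\alpha, f(\alpha),\dots,f^{k}(\alpha)$ whose union contains a simple closed curve $J$; by the Jordan curve theorem $J$ bounds a closed disk $\Delta$. Tracking the images of $\Delta$ under $f$ and using that $f$ is orientation preserving, a Poincar\'e index computation (equivalently, an application of the Brouwer fixed point theorem to a suitable self-map) forces a periodic point of $f$ inside $\Delta$. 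One then invokes the companion fact, also due to Brouwer, that an orientation-preserving fixed-point-free homeomorphism of $\R^{2}$ has no periodic points of any period; this contradicts $\mathrm{Fix}(f)=\emptyset$ and establishes the free property. This index/Jordan-curve argument, together with the no-periodic-points statement, is the main obstacle, and I would cite the classical treatments for the detailed index bookkeeping rather than reproduce it.

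Granting the free property, the bi-infinite union $\Gamma=\bigcup_{n\in\Z} f^{n}(\alpha)$ is a properly embedded, $f$-invariant line on which $f$ acts as a shift by one arc. From $\Gamma$ and the free property I would construct a properly embedded line $L$ that is transverse to the dynamics, disjoint from both $f(L)$ and $f^{-1}(L)$, and separating them: the Jordan--Schoenflies theorem shows such an $L$ cuts $\R^{2}$ into two proper half-planes, one containing $f(L)$ and the other $f^{-1}(L)$. The open region bounded by $L\cup f(L)$ is then a domain of translation in the sense of \defref{domtrans}.

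Finally, to obtain the conclusion as stated, I would arrange the separating line $L$ to miss the prescribed point $x$, placing it just behind $x$ along the invariant line $\Gamma$, so that $x$ lies in the interior of the strip bounded by $L\cup f(L)$. This exhibits $x$ inside a domain of translation and, since $x$ was arbitrary, proves that every point of $\R^{2}$ is contained in some domain of translation. Turning the invariant line $\Gamma$ into a genuinely separating transverse Brouwer line $L$ is the only remaining technical point, but it is routine once the free property of translation arcs is in hand.
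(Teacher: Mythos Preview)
The paper does not prove this theorem at all: it is stated explicitly ``without proof'' and cited as \cite[Theorem~2.2]{jf92}, to be used as a black box in the proof of \propref{freetofoliation}. Your sketch is the standard translation-arc argument (existence of a translation arc, Brouwer's lemma that its iterates are disjoint, and the resulting Brouwer line bounding a translation strip), which is precisely the route taken in the cited reference \cite{jf92}; so there is nothing in the present paper to compare it against, and your outline is consistent with the source the authors defer to.
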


\begin{remark}\label{conjtrans}
    If $D$ is the closure of a domain of translation of $f$, then $U=\bigcup f^n(D)$, for all integers $n$, is an open set invariant under $f$ and the restriction of $f$ to $U$ is conjugate to a translation of $\R^2$ \cite[Page 220]{jf92}. 
\end{remark}
\subsection*{Proof of \propref{freetofoliation}} 
    We will show that $\{L^f_x : x\in\R^2\}$ satisfies the definition of foliation above. First, we note that $L^f_x$ is a connected set for every $x$ in $\mathbb{R}^2$ by definition of a flowline. Next, $L^f_x$ is disjoint from $L^f_y$ unless $L^f_x=L^f_y$ since every $x$ in $\mathbb{R}^2$ is contained in a unique flowline.
    
    By \thmref{brouwertrans}, for any $x \in \R^2$ there is a domain of translation $W$ of $f$ containing $x$. Let $U=\bigcup f^n(D)$ for all integers $n$, where $D$ is the closure of $W$. By \remref{conjtrans}, $f(U)=U$ and there is a homeomorphism $h: U \rightarrow \R^2$ such that $hfh^{-1}:\R^2\rightarrow\R^2$ is a translation. Since all translations are conjugate to each other in $\mathrm{Homeo}(\R^2)$ (see the proof of \lemref{trans}), we can assume that $hfh^{-1}$ is the translation that shifts every point on the plane $1$ unit to the right. We will denote this translation by $T_{1,0}$. Now we argue that for the point $x$, the homeomorphism $h=(h_1,h_2):U\rightarrow \R^2$ fits the requirements of the definition of foliation. First note that for any flowline $L^f_y$, the function $hfh^{-1}(h(L^f_y))=h(L^f_y)$. Since $hfh^{-1}=T_{1,0}$ is a translation, and the only $1$-dimensional submanifolds of the plane that a translation can leave invariant are its flowlines, we conclude that $h(L^f_y)$ must be a horizontal line. Hence, $h_2$ is a constant function depending on the flowline $L^f_y$. \qed
    
We end this section with the following definition which will be used later.

\begin{definition}\label{csa}
	A group $G$ acting on a topological space $X$ is said to be a \emph{covering space action} if every $x \in X$ has a neighborhood $U_x \subset X$ such that $g(U_x) \cap U_x=\emptyset$ for all $g\ne e$, where $e$ is the identity element of $G$.
\end{definition}

\section{Some basic results on free mappings}
We begin with the following basic fact on free mappings:
\begin{lemma}\label{region}
	Let $f$ be a free mapping and $h$ in $\Homeo(\R^2)$. Then the following hold for all $x$ in $\R^2$. 
	\begin{enumerate}[leftmargin=*]
		\item \label{equi1} $\{x\}_f=\{x\}_{f^{-1}}$.
		\item\label{equi2} $x\sim_{f} y$ if and only if $h(x)\sim_{hfh^{-1}} h(y)$ $($ i.e., $h(\{x\}_f)=\{h(x)\}_{hfh^{-1}}$ $)$.
	\end{enumerate}
\end{lemma}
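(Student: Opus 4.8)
The plan is to prove both parts directly from the definition of codivergence, using only that the divergence of a subset of $\R^2$ is phrased purely in terms of compact sets and that homeomorphisms (and their inverses) carry compact sets to compact sets. Embeddability in a flow plays no role here.

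For part (1), I would simply unwind the definitions. For a curve $\gamma$ joining $x$ and $y$, the condition $f^n(\gamma)\to\infty$ as $n\to\pm\infty$ asserts that for every compact $K\subset\R^2$ we have $f^n(\gamma)\cap K=\emptyset$ for all but finitely many positive $n$ and for all but finitely many negative $n$. Substituting $m=-n$ turns this, word for word, into the statement that $(f^{-1})^m(\gamma)\to\infty$ as $m\to\pm\infty$. Hence $\sim_f$ and $\sim_{f^{-1}}$ are literally the same relation on $\R^2$; in particular they have the same equivalence classes, so $\{x\}_f=\{x\}_{f^{-1}}$.

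For part (2), suppose $x\sim_f y$ via a curve $\gamma$ with $f^n(\gamma)\to\infty$. Then $h\circ\gamma$ is a curve joining $h(x)$ and $h(y)$, and since $(hfh^{-1})^n=hf^nh^{-1}$ we get $(hfh^{-1})^n(h(\gamma))=h(f^n(\gamma))$. To check $h(f^n(\gamma))\to\infty$, fix a compact $K\subset\R^2$; then $h^{-1}(K)$ is compact, so $f^n(\gamma)\cap h^{-1}(K)=\emptyset$ for all but finitely many $n$ of either sign, and applying $h$ yields $h(f^n(\gamma))\cap K=\emptyset$ for all but finitely many such $n$. Thus $h(x)\sim_{hfh^{-1}}h(y)$. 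The converse follows by applying this implication to the homeomorphism $h^{-1}$ and the free mapping $hfh^{-1}$, using $h^{-1}(hfh^{-1})h=f$. Finally the set identity $h(\{x\}_f)=\{h(x)\}_{hfh^{-1}}$ is a formal consequence: $z\in h(\{x\}_f)$ iff $h^{-1}(z)\sim_f x$ iff (by the equivalence just proved) $z\sim_{hfh^{-1}}h(x)$ iff $z\in\{h(x)\}_{hfh^{-1}}$.

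The whole argument is bookkeeping, and there is no real obstacle; the only points needing a little care are keeping track of the two-sided limit $n\to\pm\infty$ together with the ``all but finitely many'' clause, and noting explicitly that $h$ and $h^{-1}$ preserve compactness, which is precisely what makes the divergence condition conjugation-invariant.
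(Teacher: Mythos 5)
Your proposal is correct and follows essentially the same route as the paper: part (1) by reindexing $n\mapsto -n$ in the divergence condition, and part (2) by using $(hfh^{-1})^n(h(\gamma))=h(f^n(\gamma))$ together with the fact that $h^{-1}(K)$ is compact (you argue the implication directly where the paper argues by contradiction, but the content is identical). The concluding set identity is the same formal consequence the paper records parenthetically.
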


\begin{proof}
	\begin{enumerate}[leftmargin=*]
		\item By definition, $\{x\}_f=\{y\in \R^2 : x\sim_{f} y\}$. Suppose that $\gamma$ is a curve joining $x$ and $y$. Then $f^n(\gamma)\rightarrow \infty$  as $n\rightarrow \pm\infty$ if and only if $(f^{-1})^{-n}(\gamma)\rightarrow \infty$ as $-n\rightarrow \mp\infty.$ Hence, the proof.
		\item Let $\gamma$ be a curve joining $x$ and $y$ as above, then $h\circ\gamma=h(\gamma)$ is a curve joining $h(x)$ and $h(y)$. Note that  $(hfh^{-1})^n(h(\gamma))=(hf^nh^{-1})(h(\gamma))=hf^n(\gamma)$. Now we will show that if $f^n(\gamma)\rightarrow \infty$ as $ n\rightarrow \pm\infty$, then $hf^n(\gamma)\rightarrow \infty$ as $n\rightarrow \pm\infty$. By way of contradiction, suppose that $hf^n(\gamma)\nrightarrow \infty$ as $n\rightarrow \infty$ or $n\rightarrow -\infty$. Without loss of generality, suppose $hf^n(\gamma)\nrightarrow \infty$ as $n\rightarrow \infty$. Then there exists a compact subset $K$ of $\R^2$ such that $K\cap hf^n(\gamma)\neq \emptyset$ for all $n\in\mathbb{N}$. Therefore for all $n\in\mathbb{N}$,
		\begin{align*}
		h^{-1}(K\cap hf^n(\gamma))&\neq \emptyset\\
		\Rightarrow h^{-1}(K)\cap f^n(\gamma)&\neq \emptyset.
		\end{align*}
		This contradicts the fact that $f^n(\gamma)\rightarrow \infty$ (as $n\rightarrow \pm\infty$) since $h^{-1}(K)$ is compact. Hence the claim. To show the converse one can do a similar computation with $h(\gamma)$ replaced by $h^{-1}(\gamma')$ where $\gamma'$ is a curve joining $h(x)$ and $h(y)$. 
	\end{enumerate}
\end{proof}
As an immediate corollary, we have:
\begin{corollary}\label{samefun}
	Let $f$ and $g$ be free mappings having $m$ and $n$ fundamental regions respectively. If $f$ and $g$ are conjugate in $\Homeo(\R^2)$, then $m=n$. 
\end{corollary}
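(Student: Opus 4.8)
\noindent
\emph{Proof proposal.} The plan is to read off the statement directly from part \eqref{equi2} of \lemref{region}. Since $f$ and $g$ are conjugate in $\Homeo(\R^2)$, we may write $g = hfh^{-1}$ for some $h \in \Homeo(\R^2)$. First I would record that $g$ is genuinely a free mapping: it is fixed point free because $\mathrm{Fix}(hfh^{-1}) = h(\mathrm{Fix}(f)) = \emptyset$, and it is orientation preserving even when $h$ is not, since conjugating an orientation-preserving homeomorphism by an orientation-reversing one is again orientation preserving. (This is already observed in \secref{freemap}, so it costs nothing.)

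Next I would apply \lemref{region}\eqref{equi2}, which gives $h(\{x\}_f) = \{h(x)\}_{hfh^{-1}} = \{h(x)\}_g$ for every $x \in \R^2$. Thus $h$ carries each fundamental region of $f$ onto a fundamental region of $g$, and I would package this into a map $\Phi\colon \{x\}_f \mapsto \{h(x)\}_g$ between the sets of fundamental regions. Well-definedness is immediate from the displayed identity: if $\{x\}_f = \{x'\}_f$ then $x \sim_f x'$, so $h(x)\sim_g h(x')$ and $\{h(x)\}_g = \{h(x')\}_g$. Injectivity follows by running the same argument with $h^{-1}$, which conjugates $g$ to $f$; surjectivity is clear because $h$ maps $\R^2$ onto $\R^2$. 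Hence $\Phi$ is a bijection and $m = n$.

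I do not anticipate any real obstacle here — the whole content is the conjugation-equivariance of codivergence from \lemref{region}. The only point requiring a word of care is the observation in the first paragraph that $hfh^{-1}$ is a free mapping (so that it has a well-defined collection of fundamental regions in the first place), and that is already handled in the preliminaries.
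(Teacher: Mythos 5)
Your proposal is correct and is exactly the argument the paper intends: the corollary is stated as an immediate consequence of Lemma~\ref{region}\eqref{equi2}, and your bijection $\{x\}_f \mapsto \{h(x)\}_g$ between the sets of fundamental regions is precisely that consequence. The extra verification that $hfh^{-1}$ is a free mapping is harmless but unnecessary here, since the corollary's hypothesis already assumes both $f$ and $g$ are free mappings.
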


\begin{lemma}\label{permutation}
	Suppose that $f$ is a free mapping which is reversible in $\Homeo^+(\R^2)$, then a reverser of $f$ permutes the fundamental regions of $f$.
\end{lemma}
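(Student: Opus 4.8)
The plan is to deduce this directly from the two parts of \lemref{region}. Let $h\in\Homeo^+(\R^2)$ be a reverser of $f$, so that $hfh^{-1}=f^{-1}$. First I would invoke part \eqref{equi2} of \lemref{region} with this particular $h$: for every $x\in\R^2$ it gives $h(\{x\}_f)=\{h(x)\}_{hfh^{-1}}=\{h(x)\}_{f^{-1}}$. Next, part \eqref{equi1} of the same lemma says $\{h(x)\}_{f^{-1}}=\{h(x)\}_f$. Chaining these two equalities yields $h(\{x\}_f)=\{h(x)\}_f$ for all $x$, i.e.\ $h$ sends each fundamental region of $f$ onto a fundamental region of $f$.

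It remains to observe that $h$ actually \emph{permutes} the set of fundamental regions. Since the fundamental regions partition $\R^2$ and $h$ is a bijection of $\R^2$, the assignment $\{x\}_f\mapsto\{h(x)\}_f=h(\{x\}_f)$ is well defined (the displayed identity shows the image depends only on the region, not on the representative $x$), injective (if $h(\{x\}_f)=h(\{x'\}_f)$ then applying $h^{-1}$ gives $\{x\}_f=\{x'\}_f$), and surjective (for any region $\{y\}_f$ we have $\{y\}_f=h(\{h^{-1}(y)\}_f)$, using that $h^{-1}$ is again a reverser of $f$ so the same argument applies to it). Hence $h$ induces a permutation of the fundamental regions of $f$.

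I do not anticipate any genuine obstacle here: the statement is a formal consequence of \lemref{region}, the only points requiring a line of care being the well-definedness and bijectivity of the induced map on regions, both of which are immediate from $h$ being a homeomorphism and from $h(\{x\}_f)=\{h(x)\}_f$.
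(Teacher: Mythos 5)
Your proposal is correct and follows essentially the same route as the paper: both derive $h(\{x\}_f)=\{h(x)\}_f$ by combining parts (1) and (2) of Lemma~\ref{region} applied to the reverser $h$. The extra verification that the induced map on regions is a well-defined bijection is a harmless elaboration the paper leaves implicit.
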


\begin{proof}
	Since $f$ is reversible in $\Homeo^+(\R^2)$ there exists a reverser $h$ in $\Homeo^+(\R^2)$ such that $f^{-1}=hfh^{-1}.$
	Now for $x$ in $\R^2$ by \lemref{region}\eqref{equi2}) we have  
	$\{x\}_{f^{-1}}=\{x\}_{hfh^{-1}}=\{h(h^{-1}(x))\}_{hfh^{-1}}
	=h(\{h^{-1}(x)\}_f)$.
	Therefore by putting $x=h(y)$ we get $\{h(y)\}_{f^{-1}}=h(\{y\}_f)$. So in view of  \lemref{region}\eqref{equi1}, $\{h(y)\}_f=h(\{y\}_f)$ for all $y\in \R^2$. 
\end{proof}

\begin{lemma}\label{trans}
	Translations are strongly reversible elements in $\mathrm{Homeo}^+(\mathbb{R}^2)$, i.e., they can be reversed by an involution.
\end{lemma}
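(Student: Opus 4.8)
The plan is to reduce everything to the standard translation $T_{1,0}(x,y)=(x+1,y)$ and then write down an explicit orientation-preserving involution that conjugates it to its inverse. First I would record the reduction step that the proof of \propref{freetofoliation} already assumed: any two translations of $\R^2$ (in the sense of fixed-point-free homeomorphisms conjugate to a Euclidean translation, or just the Euclidean translations themselves) are conjugate in $\Homeo^+(\R^2)$. For Euclidean translations this is immediate: if $T_v$ and $T_w$ are translations by nonzero vectors $v,w$, pick any $A\in\mathrm{GL}_2(\R)$ with $\det A>0$ and $Av=w$; then $A$ is an orientation-preserving homeomorphism with $A T_v A^{-1}=T_w$. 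Hence it suffices to prove the claim for the single translation $T_{1,0}$, and since strong reversibility is a conjugacy invariant (noted in the Introduction), the lemma follows for every translation.

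Next I would exhibit the involution. Let $R(x,y)=(-x,-y)$, the rotation by $\pi$ about the origin. This is orientation preserving, and $R^2=\Id$, so $R$ is an involution in $\Homeo^+(\R^2)$. A direct computation gives
\[
R\,T_{1,0}\,R^{-1}(x,y)=R\,T_{1,0}(-x,-y)=R(-x+1,-y)=(x-1,y)=T_{1,0}^{-1}(x,y),
\]
so $R$ reverses $T_{1,0}$. Therefore $T_{1,0}$ is strongly reversible in $\Homeo^+(\R^2)$, and by the previous paragraph so is every translation.

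There is essentially no serious obstacle here; the only point requiring a little care is the meaning of ``translation.'' If the intended statement is about arbitrary homeomorphisms conjugate to a Euclidean translation (as arises via \remref{conjtrans}), then one first conjugates such an $f$ to $T_{1,0}$ by some $\phi\in\Homeo^+(\R^2)$, applies the involution $R$ above to get $R(\phi f\phi^{-1})R^{-1}=(\phi f\phi^{-1})^{-1}$, and then notes that $\phi^{-1}R\phi$ is an involution in $\Homeo^+(\R^2)$ reversing $f$; this uses only that conjugation is a group automorphism, so it preserves both the involution condition and orientation-preservation. One subtlety worth a sentence: $R$ itself is orientation preserving precisely because we are in even dimension $2$ (the antipodal map on $\R^2$ has determinant $+1$), which is why the reverser lands in $\Homeo^+(\R^2)$ rather than only in $\Homeo(\R^2)$.
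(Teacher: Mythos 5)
Your proof is correct and uses essentially the same key idea as the paper: the antipodal map $(x,y)\mapsto(-x,-y)$ is an orientation-preserving involution conjugating a translation to its inverse. The only difference is that your reduction to $T_{1,0}$ is unnecessary, since $R\,T_{a,b}\,R^{-1}=T_{-a,-b}=T_{a,b}^{-1}$ holds directly for every translation, which is exactly how the paper concludes (it proves the mutual conjugacy of translations only because that fact is needed elsewhere).
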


\begin{proof}
	For two fixed real numbers 
	$a$ and $b$, let $T_{a,b}$ denote the translation that maps $(x,y)$ 
	to $(x+a,y+b)$ for all $(x,y)$ in  $\mathbb{R}^2$. First, we will show that 
	any two non-trivial translations $T_{a,b}$ and $T_{c,d}$ are conjugate. 
	Conjugating both translations by a single suitable rotation we 
	may assume that $a,b,c,$ and $d$ are all non-zero reals. Then
	$h^{-1}T_{a,b}h=T_{c,d}$ where $\displaystyle h(x,y)=\left(ax/c,by/d \right)$. 
	As a special case of this, we observe that the antipodal map, also an orientation preserving involution of $\R^2$, given by $f_A: (x,y)\mapsto (-x,-y)$ conjugates 
	any translation to its inverse, i.e., $f_AT_{a,b}f_A^{-1}=T_{a,b}^{-1}$.
	Therefore translations are strongly reversible elements 
	of $\Homeo^+(\R^2)$. 
\end{proof}

 Now we explore how the fundamental regions of a free mapping affect its conjugacy class and thereby its reversibility. By \corref{samefun}, conjugate free mappings with finite fundamental regions have the same number of fundamental regions. The converse of this statement is not true in general. However, it is true for free mappings with only one fundamental region. Andrea \cite[Theorem 4.1]{andrea67} shows that a free mapping is conjugate to a translation if and only if it has exactly one fundamental region. Therefore by \lemref{trans} any free mapping with one fundamental region is reversible. A free mapping with exactly two fundamental regions does not exist (see \cite[Proposition 3.2]{andrea67}). For free mappings with three or more fundamental regions, same number of fundamental regions is not enough to guarantee conjugacy. To see an example of this phenomenon we will study the Reeb flow $f$ (see \cite[Page 67, Example 1]{andrea67}) which has exactly three fundamental regions denoted as follows: $R_{-1}=\{(x,y)\in \R^2: x\le -1\}$, $R_{0}=\{(x,y)\in \R^2: -1< x < 1\}$, and $R_{1}=\{(x,y)\in \R^2: x\ge 1\}$. The restriction of $f$ on $R_{-1}$ is $T_{(0,1)}$ and the restriction of $f$ on $R_{-1}$ is $T_{(0,-1)}$. The exact formula for the restriction of $f$ on $R_0$ will not be needed for our purposes. From the diagram below we can see that $f$ is reversible in $\Homeo(\R^2)$ by the reflection about the $y$-axis, denoted by $r$. However, the following lemma shows that $f$ is not reversible in $\Homeo^+(\R^2)$.

\begin{figure}[H]
\centering
    \includegraphics[width=90mm,scale=.5]{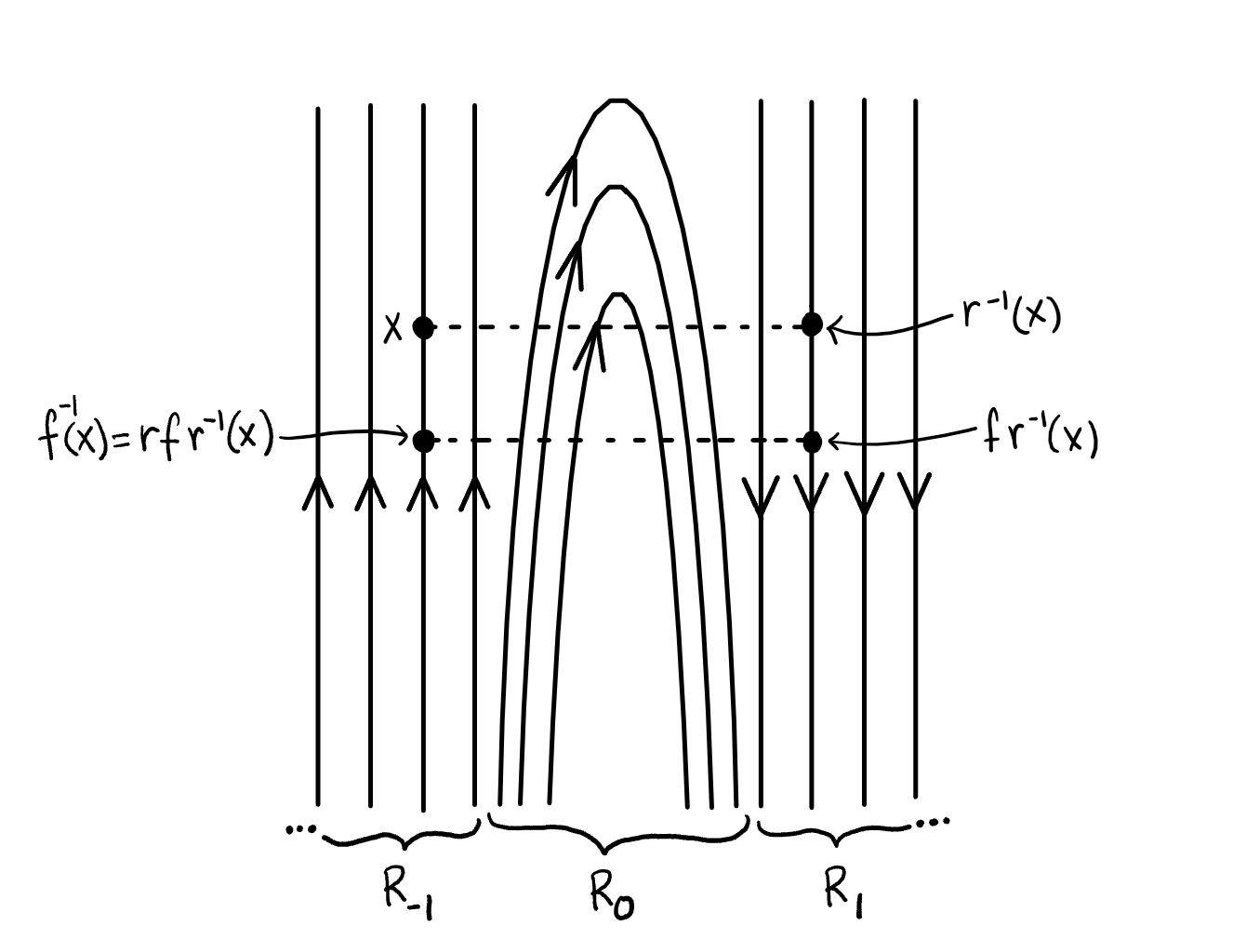}
    \caption{Reversibility of the Reeb flow in $\Homeo(\R^2)$}
\end{figure}

\begin{lemma}\label{reebflow}
	The Reeb flow is not reversible in $\Homeo^+(\R^2)$.
\end{lemma}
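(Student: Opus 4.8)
Suppose, for contradiction, that $f$ is reversible in $\Homeo^{+}(\R^2)$, and fix an orientation preserving reverser $h\in\Homeo^{+}(\R^2)$, so $hfh^{-1}=f^{-1}$. The plan is to produce out of $h$ a homeomorphism $g$ of $\R^2$ that fixes the closed half-plane $R_{-1}=\{(x,y)\in\R^2:x\le -1\}$ setwise, and then to see that two pieces of data attached to $g$ — its orientation type, and the way $g$ conjugates the translation $f|_{R_{-1}}=T_{(0,1)}$ — become incompatible after passing to the quotient cylinder $R_{-1}/\langle f\rangle$. First, by \lemref{permutation} the reverser $h$ permutes the fundamental regions $\{R_{-1},R_0,R_1\}$; since $R_0$ is open in $\R^2$ while $R_{-1}$ and $R_1$ are not, and homeomorphisms carry open sets to open sets, $h(R_0)=R_0$, so $h$ either fixes both $R_{-1}$ and $R_1$ or interchanges them. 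If it fixes them, set $g:=h$, an orientation preserving homeomorphism with $g(R_{-1})=R_{-1}$ and $gfg^{-1}=f^{-1}$. If it interchanges them, set $g:=rh$, where $r$ is the reflection about the $y$-axis (orientation reversing, and a reverser of $f$ in $\Homeo(\R^2)$ as observed above); then $g$ is orientation reversing, $g(R_{-1})=r(h(R_{-1}))=r(R_1)=R_{-1}$, and $gfg^{-1}=r(hfh^{-1})r^{-1}=rf^{-1}r^{-1}=(rfr^{-1})^{-1}=f$, so $g$ commutes with $f$.

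Next I would pass to the quotient. On $R_{-1}$ the map $f$ acts as $T_{(0,1)}$, so $\langle f|_{R_{-1}}\rangle=\langle T_{(0,1)}\rangle\cong\Z$ acts freely and properly discontinuously on the half-plane $R_{-1}$ with quotient a half-open cylinder $C\simeq(-\infty,-1]\times\mathbb{S}^1$, and $\pi_1(C)\cong\langle T_{(0,1)}\rangle\cong\Z$. Restricting $g$ to the invariant half-plane $R_{-1}$ gives a homeomorphism $g|_{R_{-1}}$ of $R_{-1}$ whose orientation type agrees with that of $g$, and which normalizes $\langle T_{(0,1)}\rangle$: when $g=h$ it conjugates $T_{(0,1)}$ to $f^{-1}|_{R_{-1}}=T_{(0,-1)}=T_{(0,1)}^{-1}$, and when $g=rh$ it commutes with $f|_{R_{-1}}=T_{(0,1)}$. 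Hence $g|_{R_{-1}}$ descends to a homeomorphism $\bar g$ of $C$ whose orientation type is that of $g$ and whose induced automorphism of $\pi_1(C)\cong\langle T_{(0,1)}\rangle$ is conjugation by $g|_{R_{-1}}$: inversion in the case $g=h$, and the identity in the case $g=rh$.

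These conclusions are contradictory, by the following elementary fact: a self-homeomorphism of a half-open cylinder $C$ is orientation preserving if and only if it induces the identity on $\pi_1(C)\cong\Z$. (The boundary circle $\partial C$ is a deformation retract of $C$, so the $\pi_1$-action is that of the restriction to $\partial C$; a homeomorphism of a manifold with boundary has the same orientation type as its restriction to the boundary; and a self-homeomorphism of $\mathbb{S}^1$ has degree $+1$ iff it induces the identity on $\pi_1$.) In the case $g=h$, $\bar g$ is orientation preserving but acts on $\pi_1(C)$ by inversion; in the case $g=rh$, $\bar g$ is orientation reversing but acts by the identity; since $\Z\neq 0$, inversion and the identity are distinct. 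Either way we reach a contradiction, so $f$ admits no orientation preserving reverser, i.e. the Reeb flow is not reversible in $\Homeo^{+}(\R^2)$.

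The genuinely load-bearing input is the last fact about half-open cylinders; everything else is bookkeeping I would need to confirm is routine — in particular that $g$ fixes $R_{-1}$ together with its boundary line $\{x=-1\}$, and that restricting an orientation preserving (resp. reversing) self-homeomorphism of $\R^2$ to an invariant half-plane does not change its orientation type. An alternative that avoids the quotient cylinder is to argue directly on the boundary line $L=\{x=-1\}$: the outward-normal convention orients $\partial R_{-1}$ and $\partial R_1$ oppositely in the $y$-direction, so the orientation that $g|_{R_{-1}}$ induces on $L$ is forced by whether $g$ preserves or reverses orientation; on the other hand $g$ must conjugate $f|_L$, which is the translation $y\mapsto y+1$, to a translation whose direction is likewise forced; comparing the two forces the same contradiction.
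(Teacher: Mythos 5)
Your proof is correct, and its skeleton coincides with the paper's: you invoke \lemref{permutation}, observe that $R_0$ is the only open fundamental region so it must be fixed, and split into the two cases ``$h$ fixes $R_{\pm1}$'' and ``$h$ swaps them,'' reducing everything to how a self-map of the closed half-plane $R_{-1}$ interacts with the translation $T_{(0,1)}$. Where you genuinely diverge is in the final contradiction. The paper, in each case, computes that $h|_{R_{-1}}$ conjugates $T_{(0,1)}$ to $T_{(0,-1)}$ (resp.\ carries $R_{-1}$ to $R_1$ intertwining the translations) and then appeals to a figure to conclude that this is incompatible with $h$ preserving orientation. You replace that pictorial step with a precise argument: pass to the quotient cylinder $C=R_{-1}/\langle T_{(0,1)}\rangle$, identify the induced automorphism of $\pi_1(C)\cong\Z$ with conjugation on the deck group, and use the standard facts that a self-homeomorphism of a half-open cylinder preserves its unique boundary circle and is orientation preserving iff its boundary restriction has degree $+1$, i.e.\ iff it acts trivially on $\pi_1$. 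This buys rigor exactly where the paper is informal. Two small remarks. First, in Case B you use globally that $rfr^{-1}=f^{-1}$; the paper only asserts this from a figure, but you do not actually need it: only the restrictions of $r$ to $R_{\pm1}$ enter the descent to $C$, and there $r$ visibly carries $T_{(0,\mp1)}$ on $R_{\pm1}$ to $T_{(0,\mp1)}$ on $R_{\mp1}$, so the weaker statement suffices. Second, the two ``routine'' facts you flag (that $g$ preserves the boundary line $\{x=-1\}$, and that the orientation type of $g$ is inherited by $g|_{R_{-1}}$) are indeed routine: the first because a homeomorphism preserving $R_{-1}$ setwise preserves its topological boundary, the second because orientation is a local condition at interior points.
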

\begin{proof}
    Let $f$ be the Reeb flow as described above with the three fundamental regions $R_{-1}, R_{0}$, and $R_{1}$.
	If possible suppose that $f$ is reversible, i.e., $f^{-1}=hfh^{-1}$ for some $h\in\Homeo^+(\R^2)$. 
	By \lemref{permutation}, $h$ permutes $R_{-1}, R_0, R_1$.
	Note that $R_0$ is an open subset of $\R^2$ while $R_{-1}$ and $R_1$ are closed subsets of $\R^2$. Being a homeomorphism $h$ leaves $R_0$ invariant as it is the only open fundamental region and therefore $h$ either swaps $R_{-1}$ and $R_1$ or leaves them invariant. The following two cases take care of this dichotomy.
	
	\noindent
	\textbf{Case 1:} Let $h(R_{-1})=R_{-1}$ and $h(R_1)=R_1$. Let $(x,y)\in R_{-1}$ and let $h(x,y)=(\bar{x}, \bar{y})$. 
	\begin{align*}
	f^{-1}(\bar{x},\bar{y})&=hfh^{-1}(\bar{x},\bar{y})\\
	(\bar{x},\bar{y}-1)&=hf(x,y)\\
	(\bar{x},\bar{y})-(0,1)&=h(x,y+1)\\
	 h(x,y)-(0,1)&=h(x,y+1)\\
	 h(x,y)-h(x,y+1)&=(0,1).
	\end{align*}
	Therefore $h(x,y)-h(f(x,y))=(0,1)$. Now let $p$ be any point in $R_0$. The diagram below explains the contradiction since $h$ is an orientation preserving homeomorphism.
	
	\begin{figure}[H]
    \centering
    \includegraphics[width=90mm,scale=.5]{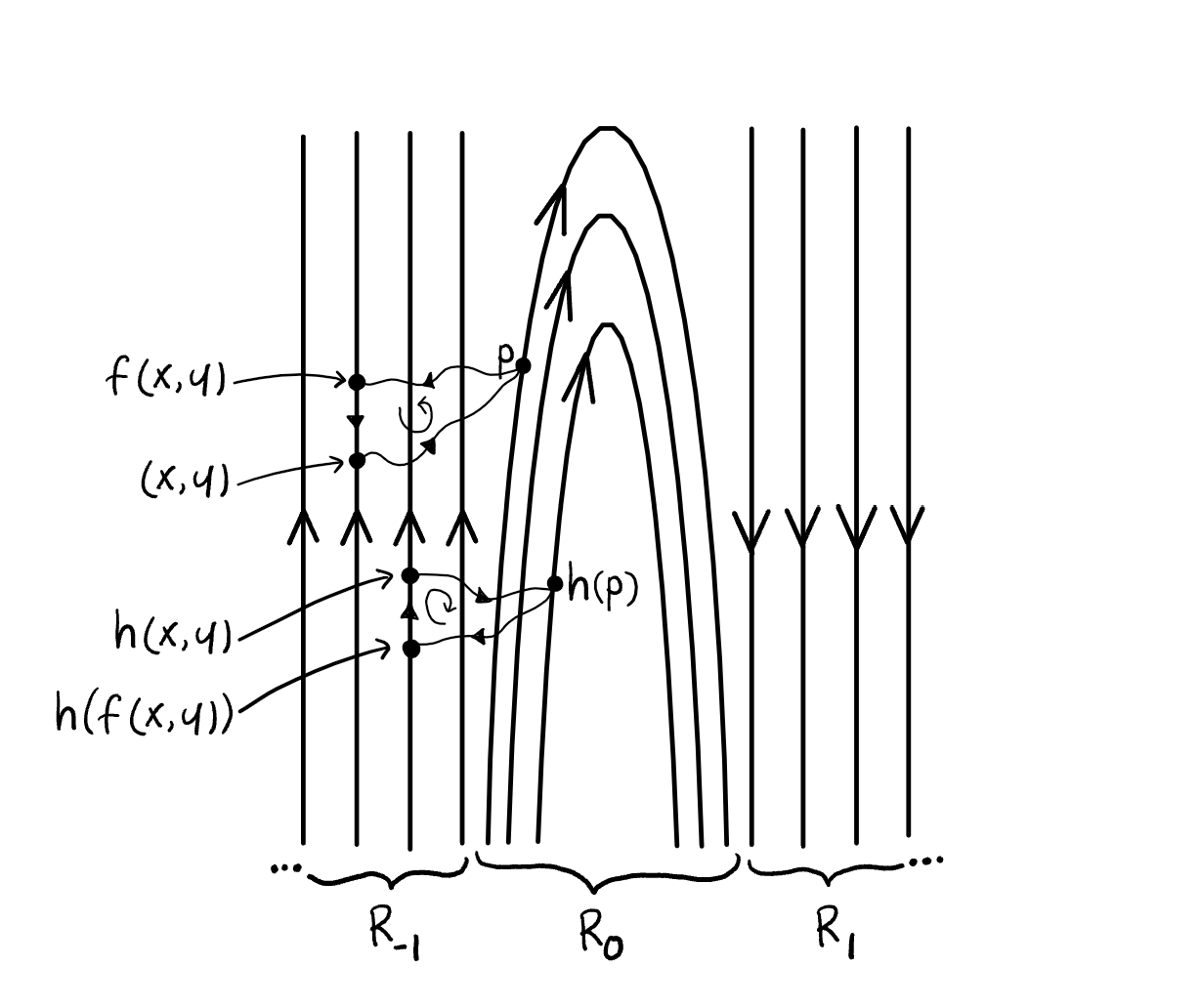}
    \caption{Non reversibility of the Reeb flow in $\Homeo^{+}(\R^2)$}
    \end{figure}
	
	\noindent		
	\textbf{Case 2:} Let $h(R_{-1})=R_1$ and $h(R_1)=R_{-1}$. Let $(x,y)\in R_{-1}$ and let $h(x,y)=(\bar{x}, \bar{y})$ in $R_1$. 
	\begin{align*}
	f^{-1}(\bar{x},\bar{y})&=hfh^{-1}(\bar{x},\bar{y})\\
	(\bar{x},\bar{y}+1)&=hf(x,y)\\
	(\bar{x},\bar{y})+(0,1)&=h(x,y+1)\\
	h(x,y)+(0,1)&=h(x,y+1)\\
	h(x,y+1)-h(x,y)&=(0,1).
	\end{align*}
	Thus we have $h(f(x,y))-h(x,y)=(0,1)$ for all $(x,y)\in R_{-1}$. Again we arrive at a contradiction as in Case 1.
\end{proof}
\section{Proof of the main results}\label{mainsec}
 We have seen in the previous section that having the same number of fundamental regions is not sufficient for conjugacy, we have obtained the following result that indicates what is. 

\begin{theorem}\label{mainthm1}
 Suppose $f$ and $g$ are two free mappings of $\R^2$, with finitely many fundamental regions, that are embedded in flows. Then either $f$ is conjugate to $g$ or $f$ is conjugate to $g^{-1}$ in $\Homeo^{+}(\R^2)$ if and only if their corresponding oriented plane foliations are equivalent.
 \end{theorem}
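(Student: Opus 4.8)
The plan is to prove the two implications separately, the direction ``equivalent foliations $\Rightarrow$ conjugate to $g$ or $g^{-1}$'' being the substantial one.

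\emph{Conjugacy implies equivalent foliations.} Suppose $g = hfh^{-1}$ with $h \in \Homeo^{+}(\R^{2})$. Since $f$ is embedded in a flow $\{\bar f^{\rho}\}$, the conjugated family $\bar g^{\rho} := h\bar f^{\rho}h^{-1}$ satisfies the four conditions of \defref{embeddable} with $\bar g^{1} = g$, so $g$ too is embedded in a flow, and by \lemref{darlinglemma} its flowlines are intrinsic to it. Since $L^{g}_{h(x)} = \{\, h\bar f^{\rho}(x) : \rho \in \R \,\} = h(L^{f}_{x})$, the homeomorphism $h$ carries the leaves of $F(f)$ onto those of $F(g)$; and as $\rho \mapsto \bar g^{\rho}(h(x)) = h(\bar f^{\rho}(x))$ traverses $L^{g}_{h(x)}$ in the order inherited from $L^{f}_{x}$, it respects the flow orientations, so $F(f)$ and $F(g)$ are equivalent. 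If instead $g^{-1} = hfh^{-1}$ with $h \in \Homeo^{+}$, the same computation makes $h$ an equivalence of the oriented foliations $F(f)$ and $F(g^{-1})$; since $F(g^{-1})$ has the same underlying leaves as $F(g)$ with the orientation reversed, one then checks that $F(g)$ is equivalent to its own orientation reversal---via \lemref{equivfoliations}, by exhibiting an order-reversing self-homeomorphism of $V_{g}$, or concretely by gluing, over the finitely many fundamental regions of $g$ (on each of which the flow is modelled on a translation, by \remref{conjtrans}), an orientation-preserving homeomorphism of $\R^{2}$ that reverses every flowline of $g$.

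\emph{Equivalent foliations imply conjugacy.} Assume $F(f) \sim F(g)$. By \lemref{equivfoliations} there is an order-preserving homeomorphism $V_{f} \to V_{g}$, whence $V_{f} \times \S^{1} \simeq V_{g} \times \S^{1}$. The heart of the argument is to identify $\R^{2}/\langle f \rangle$ with $V_{f} \times \S^{1}$. The flow $\{\bar f^{\rho}\}$ commutes with $f = \bar f^{1}$, so it descends to the quotient $Q_{f} := \R^{2}/\langle f \rangle$; since $\bar f^{1}$ acts trivially there this is an action of $\S^{1} = \R/\Z$, and each flowline---an $f$-invariant copy of $\R$ on which $f$ acts as a fixed-point-free shift (\remref{properflowline})---descends to a circle. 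Local triviality of the projection $Q_{f} \to Q_{f}/\S^{1} = V_{f}$ comes from the plaque structure furnished by \propref{freetofoliation}, over which $Q_{f}$ is literally an interval times $\S^{1}$; this is the fiber-bundle lemma announced in the introduction. Next, $V_{f}$ is a simply connected, second-countable $1$-manifold (\propref{foltoleaf}) with only finitely many branch points, because $f$ has finitely many fundamental regions; such a ``finite non-Hausdorff tree'' is contractible, collapsing to a point through its branch structure, which is the contractibility lemma. A fiber bundle over a contractible base being trivial, $Q_{f} \simeq V_{f} \times \S^{1}$ and likewise $Q_{g} \simeq V_{g} \times \S^{1}$, so $Q_{f} \simeq Q_{g}$.

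It remains to run the covering-space argument. Since $f$ is a free mapping its orbits are discrete, $\langle f \rangle \cong \Z$ acts on $\R^{2}$ as a covering space action (\defref{csa}), and $\R^{2} \to Q_{f}$ is the universal cover; likewise for $g$. Hence a homeomorphism $\Phi \colon Q_{f} \to Q_{g}$ lifts to $\widetilde\Phi \colon \R^{2} \to \R^{2}$ with $\widetilde\Phi \, \langle f \rangle \, \widetilde\Phi^{-1} = \langle g \rangle$, so that $\widetilde\Phi f \widetilde\Phi^{-1} = g^{\pm 1}$. If $\widetilde\Phi \in \Homeo^{+}(\R^{2})$ we are done; otherwise I would replace $\Phi$ by its composition with a suitable orientation-reversing self-homeomorphism of $Q_{g}$---built from the translation models on the fundamental regions, or from an orientation-reversing fibrewise reparametrization of the $\S^{1}$-fibration---so arranged that the new lift lies in $\Homeo^{+}(\R^{2})$, at the possible cost of switching $g$ to $g^{-1}$. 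This produces the stated dichotomy. I expect the main obstacle to be exactly this last step: arranging the homeomorphism $Q_{f} \simeq Q_{g}$ to be compatible with the two circle fibrations and to have the correct orientation behaviour, so that a lift exists, lands in $\Homeo^{+}(\R^{2})$, and one can read off whether $f$ is conjugate to $g$ or to $g^{-1}$. By comparison, the two preparatory lemmas---the $\S^{1}$-fibration of $Q_{f}$ and the contractibility of $V_{f}$---are routine, though they do demand care with the non-Hausdorff topology of the leaf space.
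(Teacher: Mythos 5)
Your proposal follows essentially the same route as the paper: the same three ingredients (the $\S^1$-fibration of $\R^2/\langle f\rangle$ over the leaf space $V_f$, contractibility of the finite-edge leaf space by collapsing extreme edges, and the covering-space lifting argument with the final orientation adjustment performed downstairs on $\R^2/\langle g\rangle$), in the same order, together with the same flowline-transport argument for the converse. If anything, you are slightly more careful than the paper about the oriented-versus-unoriented subtlety when $f$ is conjugate to $g^{-1}$, which the paper dispatches with the remark that $g$ and $g^{-1}$ have the same flowlines.
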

The following results will be used in the proof of this theorem. We begin with the following lemma.
\begin{lemma}\label{fiberbundle}
If $f$ is a free mapping embedded in a flow, then $\mathbb{R}^2/\langle f\rangle$ is a fiber bundle over the leaf space $V_f$ with $\mathbb{S}^1$ as fibers.
\end{lemma}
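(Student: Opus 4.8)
The plan is to exhibit $\mathbb{R}^2/\langle f\rangle$ as the total space of an $\mathbb{S}^1$-bundle by carefully tracking the two group actions involved: the flow $\mathbb{R}\curvearrowright\mathbb{R}^2$ by $(\rho,p)\mapsto\bar f^\rho(p)$, and the subaction $\mathbb{Z}\curvearrowright\mathbb{R}^2$ by $n\cdot p=f^n(p)=\bar f^n(p)$. First I would observe that the map $q\colon\mathbb{R}^2\to V_f$ sending $p$ to its flowline $L_p^f$ is a submersion: by the coordinate charts constructed in the proof of \propref{freetofoliation}, locally $q$ looks like the projection $\mathbb{R}^2\to\mathbb{R}$ onto the second coordinate (the flowlines being the horizontal lines of the chart), so $V_f$ inherits its manifold structure making $q$ a locally trivial $\mathbb{R}$-bundle — indeed trivializable over each chart neighborhood. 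The fibers of $q$ are exactly the flowlines $L_p^f$, each homeomorphic to $\mathbb{R}$, and on each fiber the subgroup $\langle f\rangle$ acts by $p\mapsto\bar f^1(p)$, which by \remref{properflowline} is an order-preserving fixed-point-free homeomorphism of $L_p^f\cong\mathbb{R}$, hence conjugate to the translation by $1$. Therefore the quotient of a single fiber by $\langle f\rangle$ is a circle.

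Next I would pass to quotients. Since $\langle f\rangle$ acts on $\mathbb{R}^2$ fiberwise over $q$ (because each $L_p^f$ is $f$-invariant, by \remref{properflowline}), the map $q$ descends to a well-defined continuous surjection $\bar q\colon\mathbb{R}^2/\langle f\rangle\to V_f$. The key local statement to establish is that $\bar q$ is locally trivial with fiber $\mathbb{S}^1$. Fix $v\in V_f$ and a chart neighborhood $N$ over which $q$ is trivial, say $\varphi\colon q^{-1}(N)\xrightarrow{\ \sim\ }N\times\mathbb{R}$ with $\mathrm{pr}_1\circ\varphi=q$. I would want to choose $\varphi$ so that it conjugates the $\langle f\rangle$-action to the ``vertical'' $\mathbb{Z}$-action $n\cdot(x,t)=(x,t+1)$ on $N\times\mathbb{R}$; then $q^{-1}(N)/\langle f\rangle\simeq (N\times\mathbb{R})/\mathbb{Z}=N\times\mathbb{S}^1$, and these trivializations are compatible with $\bar q$, giving the bundle. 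Producing such an equivariant trivialization is the crux: starting from any trivialization, one builds the $\mathbb{R}$-coordinate on the fiber over $x\in N$ by choosing a continuous section $s\colon N\to q^{-1}(N)$ (available since $q^{-1}(N)\simeq N\times\mathbb{R}$) and then using the flow to define $t$ via $p=\bar f^{\,t(p)}(s(q(p)))$ — joint continuity of $\bar f^\rho(p)$ in $(\rho,p)$ from Definition \ref{embeddable}(2), together with the fact that $\rho\mapsto\bar f^\rho(x)$ is a homeomorphism onto $L_x^f$ for each fixed $x$, shows $t$ is well-defined and continuous, and $\bar f^1$ becomes $t\mapsto t+1$ by Definition \ref{embeddable}(3). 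This realizes $q^{-1}(N)\simeq N\times\mathbb{R}$ equivariantly, hence the descent.

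Finally I would assemble the local trivializations into a fiber bundle structure: the transition functions between overlapping charts $N,N'$ are, downstairs, the quotients by $\mathbb{Z}$ of the upstairs transition functions $N\cap N'\to\mathrm{Homeo}^+(\mathbb{R})$, and since these upstairs transitions commute with the $\mathbb{Z}$-action (both trivializations being equivariant), they induce continuous maps $N\cap N'\to\mathrm{Homeo}(\mathbb{S}^1)$; this is exactly the data of a locally trivial $\mathbb{S}^1$-bundle over $V_f$, whose total space is $\mathbb{R}^2/\langle f\rangle$. (One should note $V_f$ is a manifold, though possibly non-Hausdorff, by \propref{foltoleaf}; local triviality only needs the chart neighborhoods, so non-Hausdorffness is not an obstruction here.)

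\textbf{Main obstacle.} I expect the genuinely delicate step to be the construction of the \emph{$\langle f\rangle$-equivariant} local trivialization of $q$ — i.e., simultaneously trivializing the $\mathbb{R}$-bundle $q$ over a chart \emph{and} straightening the $f$-action on the fibers to the unit translation, uniformly in the base parameter. The raw trivialization from \propref{freetofoliation} trivializes $q$ but need not respect the flow parametrization; repairing this requires the joint continuity in Definition \ref{embeddable}(2) in an essential way, and one must check that the resulting fiber coordinate $t$ is a homeomorphism on each fiber and varies continuously over the base, which is where \lemref{darlinglemma} (independence of the flowline from the chosen flow) and \remref{properflowline} (order-preservation, $\bar f^0=\mathrm{Id}$, $\bar f^n=f^n$) do the real work.
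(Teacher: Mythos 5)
Your proposal is correct and follows essentially the same route as the paper: over a foliation-chart neighborhood one picks a transversal (your section $s$ is the paper's $v^{-1}(\alpha(0,1))$), defines the fiber coordinate by the flow time $t(z)$ with $\bar f^{\,t(z)}(z_0)=z$, and sends $P(z)\mapsto (Q(z), e^{2\pi i t(z)})$, so that $f=\bar f^1$ becomes the unit shift and each fiber quotient is $\mathbb{S}^1$. The only difference is presentational — you phrase the construction via equivariant trivializations and descending transition functions, while the paper writes the local trivialization $h_x$ directly — and the ``main obstacle'' you identify is exactly the step the paper resolves with the transversal-plus-flow-time construction.
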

\begin{proof}
Note that $\mathbb{R}^2/\langle f\rangle=\{[x]_f : x\in \mathbb{R}^2\}$, where $[x]_f=\{f^n(x) : n\in \mathbb{Z}\}$. Recall that the leaf space $V_f=\{[x]_{\approx_{F(f)}} : x\in \mathbb{R}^2\}$, corresponding to the foliation $F(f)$, is a $1$-dimensional simply-connected possibly non-Hausdorff manifold. Here, $[x]_{\approx_{F(f)}}=\{y : L_x^f=L_y^f\}$. Also, recall that $L_z^f$ denotes the unique flowline of $f$ passing through $z$. Let $P$ and $Q$ be the quotient maps defined by $P(x)=[x]_f$ and $Q(x)=[x]_{\approx_{F(f)}}$. Now we can define a map $\varphi_f:\mathbb{R}^2/\langle f\rangle\rightarrow V_f$ by $\varphi_f([x]_f)=[x]_{\approx_{F(f)}}$. Therefore, we have $Q=\varphi_f\circ P$ resulting in the following commutative diagram.
\[\xymatrix{
	\R^2 \ar[dr]^{Q} \ar[d]_{P} &  \\
	\R^2/\langle f\rangle \ar[r]_{\varphi_f} &V_f.
}\] 
  Thus, $\varphi_f^{-1}([x]_{\approx_{F(f)}})=Q^{-1}([x]_{\approx_{F(f)}})/\langle f\rangle$ for all $[x]_{\approx_f}\in V_f$. Note that $Q^{-1}([x]_{\approx_{F(f)}})\simeq \mathbb{R}$ which in turn implies that $\varphi_f^{-1}([x]_{\approx_{F(f)}})\simeq \mathbb{R}/\mathbb{Z}\simeq \mathbb{S}^1$.
  
  Now, we will show that $\varphi_f: \R^2/\langle f \rangle \rightarrow V_f$ is a fiber bundle, i.e., for every $[x]_{\approx_{F(f)}}$ in $V_f$ there is an open neighborhood $U_x$ in $V_f$ containing $[x]_{\approx_{F(f)}}$ and a homeomorphism $h_x: \varphi_f^{-1}(U_x)\rightarrow U_x \times \S^1$ or  $h_x: P(Q^{-1}(U_x))\rightarrow U_x \times \S^1$. Let $[x]_{\approx_{F(f)}}$ be a point in $V_f$. Consider some $w$ in $L_x^f$. By definition of foliation, there is a coordinate neighborhood $V_w$ in $\R^2$ containing $w$ and a coordinate function which is a homeomorphism, $v=(v_1,v_2): V_w \rightarrow \R^2$ such that for each flowline $L_z^f$ the function $v_2$ restricted to $L_z^f \cap V_w$ is a constant depending on $[z]_{\approx_{F(f)}}$. This means all flowlines passing through $V_w$ are mapped to horizontal lines in $\R^2$ by the map $v$. Let $D_{v(w)}$ be an open disk in $\R^2$ containing $v(w)$. Consider the open set $v^{-1}(D_{v(w)})$ in $\R^2$ and then denote the projection $Q(v^{-1}(D_{v(w)}))$ by $U_x$. We will show that this $U_x$ will give us the local trivialization condition we need (see \figref{figure5}). One can find an injective continuous map $\alpha:(0,1)\rightarrow D_{v(w)}$ such that $\alpha(0,1)$ lies on a diameter of $D_{v(w)}$ and on some straight line $x=k$, where $k$ is a constant. 
  For any $z$ in $Q^{-1}(U_x)$, the flowline $L_z^f$ intersects $v^{-1}(D_{v(w)})$. Moreover, $L_z^f$ intersects $v^{-1}(\alpha (0,1))$ exactly once  because the horizontal line $v(L_z^f \cap V_w)$ intersects $\alpha(0,1)$ exactly once. Let us denote this point of intersection by $z_0$. Let $f$ be embedded in the flow $\{\bar{f}^\rho\}$, then since $z$ and $z_0$ are on the same flowline there exists a real number $t(z)$ such that $\bar{f}^{t(z)}(z_0)=z$. Now we define $h_x: P(Q^{-1}(U_x))\rightarrow U_x \times \S^1$ as $h_x(P(z))=(Q(z),e^{i2\pi t(z)})$ which is the required homeomorphism.

  	\begin{figure}[H]
    \centering
    \includegraphics[width=140mm,scale=4.5]{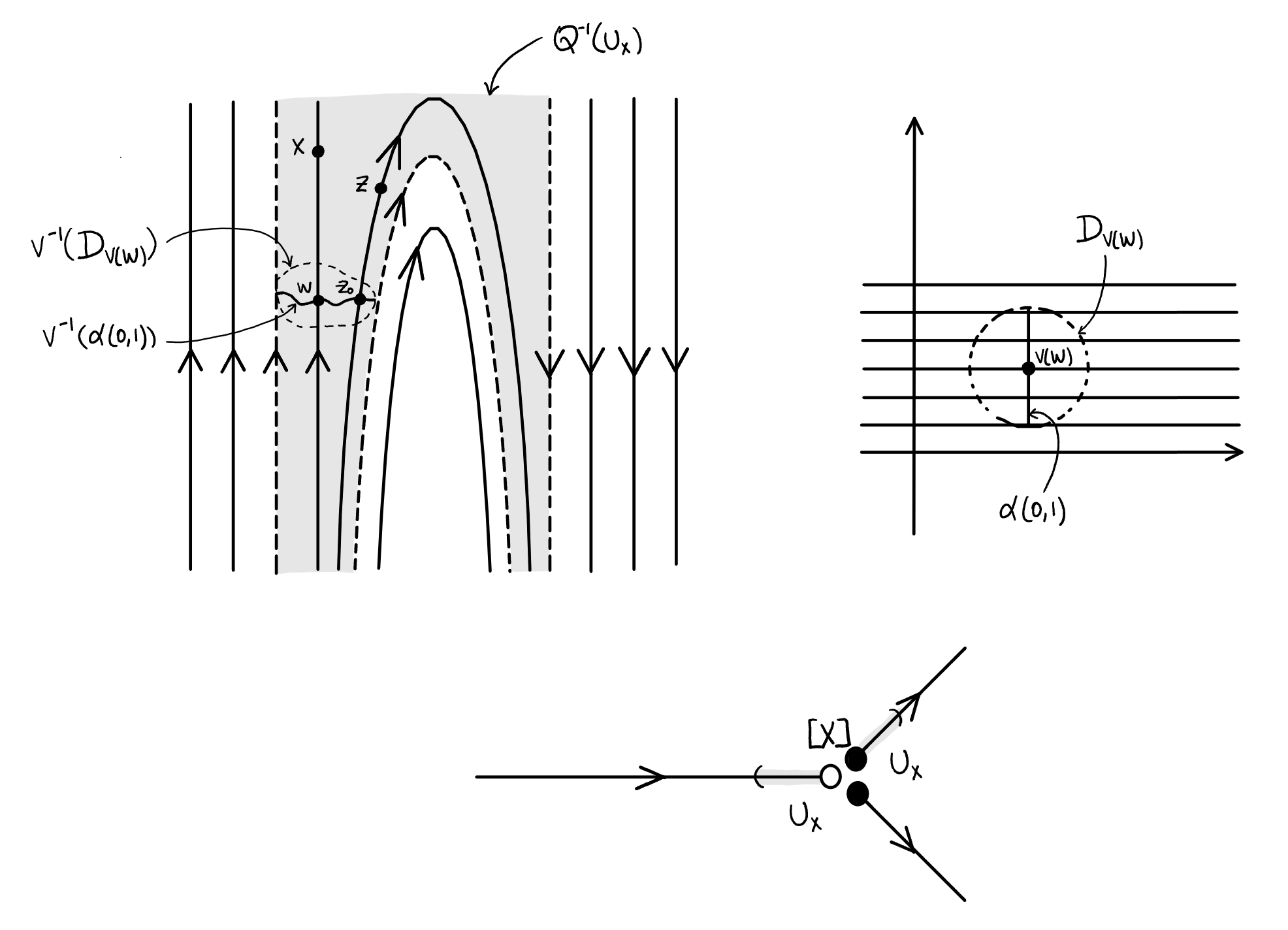}
    \caption{Illustration of the local trivialization}
    \label{figure5}
    \end{figure}
	\end{proof}
 We will now show that the above fiber bundle is trivial by showing that its base space $V_f$ is contractible. First we prove a lemma which helps us analyze the structure of $V_f$ better. Next, we set up some definitions which are essential for proving the contractibility of $V_f$.

\begin{lemma}\label{twotypes}
Let $f$ be a free mapping which is embedded in a flow. Then either $[x]_f\simeq \R$ or $\mathrm{int}([x]_f)\simeq \R^2$ for all $x\in \R^2$.
\end{lemma}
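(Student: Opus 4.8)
The plan is to split according to whether the fundamental region $[x]_f$ equals the single flowline $L_x^f$ through $x$ or is strictly larger, and in the second case to identify its interior with $\R^2$ through the Riemann mapping theorem. Throughout I would use that each $L_x^f$ is a properly embedded copy of $\R$ on which the flow acts by order‑preserving homeomorphisms (\remref{properflowline}), Brouwer's pointwise divergence, and the fact that the leaf‑projection $Q\colon\R^2\to V_f$ is an open map (so the saturation of an open set is open). The first point to establish is that $[x]_f$ is a path‑connected union of complete flowlines --- a saturated set for $F(f)$ --- which is moreover $f$‑ and flow‑invariant. Since a sub‑curve of a divergent curve and a finite concatenation of divergent curves are again divergent (both immediate from the definition), everything reduces to the claim that every arc of a flowline is divergent under $f^{\pm n}$. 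To prove that, write such an arc as $\{\bar f^{\,s}(x_0):s\in[\alpha,\beta]\}$; if $\bar f^{\,u_k}(x_0)\in K$ for a compact $K$ and $u_k\to+\infty$, write $u_k=\ell_k+r_k$ with $\ell_k\in\Z$ and $r_k\in[0,1)$ --- then by joint continuity of the flow the set $\bigcup_{r\in[0,1]}\bar f^{-r}(K)$ is compact yet contains $f^{\ell_k}(x_0)$ for all $k$, contradicting $f^{\ell_k}(x_0)\to\infty$. Concatenating a defining curve of $y\in[x]_f$ with a flowline arc then gives $L_y^f\subseteq[x]_f$, and concatenating two defining curves gives path‑connectedness.

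Next, set $S:=Q([x]_f)$, a connected subset of the $1$‑manifold $V_f$. If $S$ is a single point, then $[x]_f=Q^{-1}(S)=L_x^f\simeq\R$ and we are in the first alternative. Otherwise $[x]_f$ contains two distinct flowlines, hence there is a divergent curve $\gamma$ meeting both. Covering the compact arc $\gamma$ by flow boxes and passing to a finite subcover, one finds a flow box $v=(v_1,v_2)\colon V_p\to\R^2$ carrying flowlines to horizontal lines and a subarc $\gamma_0$ of $\gamma$ inside $V_p$ whose image $v(\gamma_0)$ meets at least two horizontal lines; then $v_2(v(\gamma_0))$ is a nondegenerate interval, and the $v$‑preimage of an open horizontal slab over a subinterval is a nonempty open set each of whose flowlines meets $\gamma$ and therefore lies in $[x]_f$. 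So $\Omega:=\mathrm{int}([x]_f)\neq\emptyset$; it is of course open, and since $Q$ is open it is also a union of flowlines.

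It remains to identify $\Omega$, and this is the main obstacle. Once $\Omega$ is known to be connected and simply connected, the conclusion $\Omega\simeq\R^2$ follows from the Riemann mapping theorem (together with the trivial case $\Omega=\R^2$). But I do not see how to obtain connectedness and simple‑connectedness by soft foliation‑theoretic arguments: one has to exclude, for example, a ``bowtie'' fundamental region built from two two‑dimensional pieces glued along a single boundary flowline, or a region carrying a flowline attached at one point far from the closure of its interior. Here I would invoke the plane‑specific structure theory of the fundamental regions of a flow (going back to Kaplan's classification of regular curve families; within the present framework see \cite{andrea67} and the topological analysis of fundamental regions in \cite{jones72}), by which each fundamental region, foliated by its flowlines, is equivalent to the horizontally foliated plane restricted to the whole plane, an open half‑plane, or an open strip, together with possibly some boundary leaves. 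In every case its interior is the corresponding horizontally foliated open region, which is connected and homeomorphic to $\R^2$; this completes the proof.
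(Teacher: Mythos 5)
Your overall strategy coincides with the paper's: split according to whether the fundamental region is a single flowline or has nonempty interior, and in the latter case reduce to the Riemann mapping theorem. What you do differently is to prove, rather than cite, the first half. The paper disposes of the empty-interior case by quoting the proof of Remark 9 of \cite{jones72}; you instead show directly that $[x]_f$ is a saturated union of flowlines (your compactness argument with $\bigcup_{r\in[0,1]}\bar f^{-r}(K)$ for the divergence of flowline arcs is correct and is the right way to exploit joint continuity of the flow), and your flow-box argument that $Q([x]_f)$ containing two points forces $\mathrm{int}([x]_f)\neq\emptyset$ is a clean, self-contained replacement for that citation. The two proofs converge again at the genuinely hard point, which you correctly identify: connectedness and simple connectedness of $\mathrm{int}([x]_f)$. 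Neither proof establishes this from scratch. The paper cites exactly the needed statement, \cite[Theorem 3.2]{polish} (Le\'sniak, on parallelizability of flows of free mappings), whereas you appeal to a stronger and less precisely located classification of foliated fundamental regions attributed to Kaplan, Andrea and Jones; as written that classification is an unproved assertion carrying the entire weight of the key step, and it claims more (a foliated model up to equivalence) than is needed or than is readily quotable from those sources. Replacing it with the precise reference to Le\'sniak's theorem, which yields simple connectedness of the interior of a fundamental region of a flow of Brouwer homeomorphisms, closes the argument and makes your write-up a strictly more detailed version of the paper's proof.
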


\begin{proof}
 If $\mathrm{int}([x]_f)$ is empty, then $[x]_f$ is a flowline of $f$ (see \cite[proof of Remark 9]{jones72}) and hence homeomorphic to $\R$. Now suppose $\mathrm{int}([x]_f)$ is nonempty. By the Riemann mapping theorem, any nonempty simply connected open subset of $\R^2$ is homeomorphic to $\R^2$ and by \cite[Theorem 3.2]{polish}, $\mathrm{int}([x]_f)$ is simply connected.
\end{proof}
For an arbitrary free mapping $f$ embedded in a flow, its leaf space $V_f$ is almost like a tree with modified notions of edges and vertices. Let $\{R_\lambda:\lambda \in \Lambda\}$ be the set of fundamental regions of $f$ whose interiors are homeomorphic to $\mathbb{R}^2$, where $\Lambda$ is some index set. Let $Q:\mathbb{R}^2\rightarrow V_f$ be the quotient map corresponding to the equivalence relation $\approx_{F(f)}$. Recall that $V_f$ is supplied with the quotient topology via the map $Q$ from the usual topology in $\mathbb{R}^2$. For each $\lambda$ in $\Lambda$, we will denote $Q(\mathrm{int}(R_\lambda))$ by $e_\lambda$ and call it an edge of $V_f$. We will denote by $\mathcal{E}$ the set of all edges of $V_f$, that is, $\mathcal{E}=\{e_\lambda : \lambda \in \Lambda\}$. Now let $\{R_\gamma:\gamma \in \Gamma\}$ be the set of fundamental regions that are homeomorphic to $\mathbb{R}$. We define as vertices of $V_f$ all points $Q(L)$ where $L$ is a flowline that bounds a fundamental region $R_\lambda$ for $\lambda \in \Lambda$. Note that this includes all points $Q(R_\gamma)$ for $\gamma \in \Gamma$. Let $\mathcal{V}$ denote the set of all vertices of $V_f$. It can be seen that $\mathcal{V}$ is equal to the set of all branch points (see \defref{branchpts}) of $V_f$. Note that mutually non-separable branch points in $V_f$ correspond to flowlines that bound a common fundamental region but the converse is not true. One can revisit these diagrams (see Figure \ref{figure2}) to look at the edges, vertices, and branch points of $V_f$. 

Let $e_\lambda$ be an edge in $\E$ and $x$ be any point in $e_\lambda$. Note that $e_\lambda \setminus \{x\}$ has two components. Also note that the boundary of $e_\lambda$ is equal to the boundary of $e_\lambda \setminus \{x\}$. We will say two boundary points of $e_\lambda$ are related if they are boundary points of the same component of $e_\lambda\setminus \{x\}$. This is an equivalence relation that partitions the set of boundary points of $e_\lambda$ into two disjoint sets. We will denote these two sets as $e_\lambda^-$ and $e_\lambda^+$. Now, for any edge $e_\lambda$, fix a homeomorphism $h_\lambda:(0,1)\rightarrow e_\lambda$. If the boundary of $h_\lambda(0,1/2)$ has nonempty intersection with $e_\lambda^-$ then we keep this homeomorphism otherwise compose it with an orientation reversing self-homeomorphism of $(0,1)$ and rename the composition as $h_\lambda$. 

\begin{definition}
    An edge $e_\lambda$ in $\E$ is called an {\it extreme edge} if either $e_\lambda^-$ or $e_\lambda^+$ is the empty set.
\end{definition}

\begin{definition}
    An extreme edge will be called a {\it first order extreme edge} if it has only one boundary point, otherwise it will be called a {\it second order extreme edge}. 
\end{definition}
For example in \figref{figure2}, the leaf space $V_{F_4}$ has three first order extreme edges, namely, $R_1/\approx, R_4/\approx, R_5/\approx$ and one second order extreme edge $R_2/\approx$.

\begin{lemma}\label{contractable}
If $V_f$ has finitely many edges, then it is contractible.
\end{lemma}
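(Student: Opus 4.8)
The plan is to prove this by induction on the number $n=|\E|$ of edges of $V_f$, peeling off one terminal edge at a time and using throughout that $V_f$ is a simply connected, second countable $1$-manifold (Proposition~\ref{foltoleaf}), possibly non-Hausdorff.

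For the base case $n=1$ I would argue directly. A $1$-manifold cannot be a finite set, so $V_f$ always has at least one edge, and when it has exactly one, $V_f=\overline{e_{\lambda_0}}$ is a copy of $(0,1)$ together with a finite (possibly empty, possibly several but mutually non-separable) set of boundary points accumulating at each end — precisely the local model of an extreme edge, and also the shape produced after the peelings below. Using the fixed homeomorphism $h_{\lambda_0}\colon(0,1)\to e_{\lambda_0}$, I would contract $V_f$ onto the midpoint $h_{\lambda_0}(1/2)$ by the homotopy $h_{\lambda_0}(t)\mapsto h_{\lambda_0}\bigl((1-s)t+s/2\bigr)$ on $e_{\lambda_0}$, sending every boundary point accumulated at the $0$-end to $h_{\lambda_0}(s/2)$ and every one accumulated at the $1$-end to $h_{\lambda_0}(1-s/2)$. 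The one point needing care is joint continuity at the boundary clusters; this is verified in the local charts at those points together with the pasting lemma, noting that two non-separable boundary points may be carried to the same interior point for $s>0$, which is harmless for a homotopy.

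For the inductive step ($n\ge2$) the key is to locate a \emph{first order} extreme edge. Simple connectedness of $V_f$ forbids a cyclic chain of edges, so an extreme edge exists; and a short combinatorial argument shows that if every extreme edge were of second order, then starting from one, $e_\lambda$, and passing through its attaching cluster $e_\lambda^-=\{v_1,\dots,v_k\}$ ($k\ge2$) into the edges meeting the $v_i$, finiteness of $\E$ forces either a first order extreme edge to appear after all, or a reconnection — which produces an essential loop running through the free tip of a second order extreme edge, contradicting $\pi_1(V_f)=0$. So fix a first order extreme edge $e_\lambda$; say $e_\lambda^+=\emptyset$ (the other case is symmetric), so $h_\lambda(t)$ accumulates onto the single vertex $v\in e_\lambda^-$ as $t\to0$, $\overline{e_\lambda}=e_\lambda\cup\{v\}\simeq[0,1)$, and $\overline{e_\lambda}$ meets $V_f\setminus e_\lambda$ only in $v$. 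I would then deformation retract $\overline{e_\lambda}$ onto $\{v\}$ by sliding ($h_\lambda(t)\mapsto h_\lambda((1-s)t)$ for $s<1$, $h_\lambda(t)\mapsto v$ at $s=1$, and $v$ fixed) and extend by the identity on the closed set $V_f\setminus e_\lambda$; since the two definitions agree on $\{v\}$, the pasting lemma yields a deformation retraction of $V_f$ onto $V_f\setminus e_\lambda$. That space is again a simply connected second countable $1$-manifold (now possibly with the single boundary point $v$) with exactly $n-1$ edges, so — stating the lemma for this mildly larger class, the proof being unchanged — the inductive hypothesis finishes the argument.

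The hard part, and the reason the notions introduced just above the statement are needed, is exactly this ``find a first order extreme edge'' step: one cannot peel an arbitrary extreme edge. A second order extreme edge $e_\lambda$ with attaching cluster $\{v_1,\dots,v_k\}$, $k\ge2$, admits \emph{no} deformation retraction of $V_f$ onto $V_f\setminus e_\lambda$ nor onto $V_f\setminus\overline{e_\lambda}$, because the points of $e_\lambda$ near its attached end accumulate onto every $v_i$ simultaneously while no point of the complement does, so the image of that end cannot be continuously prescribed. Thus the real content lies in getting the combinatorial dichotomy above exactly right (a first order extreme edge is always available when $n\ge2$, the leftover second order edge being absorbed by the base case) and in verifying the continuity claims at the non-Hausdorff clusters.
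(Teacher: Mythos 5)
Your route is genuinely different from the paper's and, at the level of the individual moves, more careful. The paper collapses a first order extreme edge $e_\lambda$ by retracting $e_\lambda\cup\{x_\lambda\}\cup e_\mu$ onto $e_\mu$ (so points of $e_\mu$ move), and it also performs collapses of second order extreme edges onto an adjacent edge through one chosen cluster point; your retraction instead fixes the closed complement $V_f\setminus e_\lambda$ pointwise and slides $\overline{e_\lambda}$ onto its unique boundary vertex $v$, which is continuous precisely because $\overline{e_\lambda}=e_\lambda\cup\{v\}$ meets no other branch point. Your observation that a second order extreme edge cannot be retracted off of $V_f$ is correct (a sequence in the complement cannot converge simultaneously to two non-separable $v_i$'s, since small neighborhoods of $v_1$ and $v_2$ intersect only in a tail of $e_\lambda$ itself), and your base case, which absorbs the residual cluster into the interior of the last edge, handles exactly the configuration (e.g.\ the Reeb leaf space after peeling its two half-plane edges) that forces the paper to be careful about the order of its collapses. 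So the architecture is sound and in some respects cleaner.

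However, there is a genuine gap at the step you yourself identify as the crux: the existence of a first order extreme edge whenever $2\le|\mathcal{E}|<\infty$ is asserted, not proved. ``A short combinatorial argument shows\dots a reconnection produces an essential loop'' is not an argument; your entire induction stands or falls on this claim, and as written nothing prevents a hypothetical leaf space all of whose extreme edges are second order. To close it you need two structural facts that you never establish: (i) every branch point $Q(L)$ lies in the closure of exactly two edges, one for each side of the flowline $L$ (leaves on either side of $L$ saturate every open set containing $L$, and with finitely many fundamental regions they eventually lie in two-dimensional ones); and (ii) the incidence graph whose nodes are edges and whose bonds are branch points is a tree --- connectedness gives at least $|\mathcal{E}|-1$ branch points, and a cycle would produce a non-nullhomotopic loop in $V_f$ (as in the line with two origins), contradicting simple connectedness of the leaf space, so $|\mathcal{V}|=|\mathcal{E}|-1$. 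Granting these, a counting argument finishes: the $2|\mathcal{E}|$ edge-ends carry exactly $2|\mathcal{V}|=2|\mathcal{E}|-2$ incidences with branch points, whereas if every edge were non-extreme or second order extreme each edge would carry at least two incidences, giving at least $2|\mathcal{E}|$ in total --- a contradiction. You should also state the induction hypothesis precisely, since after one peeling the space is no longer literally a leaf space (it acquires a boundary point $v$ that may still be non-separable from interior branch points), so facts (i)--(ii) must be carried along as explicit invariants of the class of spaces you induct over. Without these pieces the proposal is a plausible plan rather than a proof.
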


\begin{proof}
We will show that there is a deformation retraction of $V_f$ onto a proper subset of $V_f$ with fewer edges by making the first and second order extreme edges ``collapse" in that order. We will then prove that $V_f$ is contractible by repeating this process. 

Let $e_\lambda$ be a first order extreme edge with boundary point $x_\lambda$. Since all flowlines are Jordan curves, the flowline $L=Q^{-1}(x_\lambda)$ bounds another fundamental region $R_\mu$ distinct from $R_\lambda$. Therefore, the corresponding edge $e_\mu$ has $x_\lambda$ as one of its boundary points. Now, using the homeomorphisms $h_\mu:(0,1)\rightarrow e_\mu$ and $h_\lambda:(0,1)\rightarrow e_\lambda$ we can construct a homeomorphism $h_{\lambda\mu}: (0,2)\rightarrow e_\lambda\cup\{x_\lambda\}\cup e_\mu$. Using the deformation retraction of $(0,2)$ onto $(0,1)$, we obtain a deformation retraction of $e_\lambda\cup\{x_\lambda\} \cup e_\mu$ onto $e_\mu$. We call the last deformation retraction, collapsing the first order extreme edge $e_\lambda$.  

Let $e_\mu$ be a second order extreme edge. If $V_f$ has more than one edge, then there exists a fundamental region $R_\lambda$ that shares a boundary flowline $L$ with $R_\mu$. Therefore, the edge $e_\lambda$ has a boundary point $x_\lambda=Q(L)$ which is also a boundary point of $e_\mu$. Now, similarly as above we obtain a deformation retraction of $e_\mu\cup\{x_\lambda\}  \cup e_\lambda$ onto $e_\lambda$. We call this deformation retraction, collapsing the second order extreme edge $e_\mu$.  

Also, we are allowed to collapse a second order extreme edge only after all the first order extreme edges have been collapsed. Note that the $e_\mu$ needed to collapse a first order extreme edge $e_\lambda$ could itself be a second order extreme edge. This is why collapsing all the first order extreme edges before collapsing any second order extreme edge is necessary.

Once we have done all possible second order moves, the resulting space has fewer edges than $V_f$. Call this new space $W_1$. We can now define the concepts of extreme edge, first order and second order extreme edge in $W_1$ in a similar way as they were defined in $V_f$. We then proceed to collapse first order and second order extreme edges of $W_1$. The resulting space will be called $W_2$. We will keep proceeding like this to get a nested sequence of spaces denoted by $W_k$ for $k=1,2,3,\ldots$. Since $V_f$ has finitely many edges, there exists a natural number $n$ such that $W_n$ contains only one edge. Because otherwise we can collapse an extreme edge of $W_n$ and continue the process. Finally, one can contract the one edge in $W_n$ to a point. Therefore, our nested sequence is as follows where each $W_i$ deformation retracts onto $W_{i+1}$:
$$V_f=W_0\supset W_1 \supset W_2 \supset \cdots W_n \supset W_{n+1}=\textit{pt.}$$

Hence, $V_f$ is contractible.
\end{proof}

\begin{lemma}\label{cover}
	If $f$ is a free mapping of $\mathbb{R}^2$, then $\langle f \rangle$ is a covering space action on $\mathbb{R}^2$.
\end{lemma}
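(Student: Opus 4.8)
The plan is to reduce the statement, locally around each point, to the trivially-true case of a translation, using Brouwer's Plane Translation Theorem. Fix $x\in\R^2$. By \thmref{brouwertrans} there is a domain of translation $W$ of $f$ containing $x$. Let $D=\overline{W}$ and set $U=\bigcup_{n\in\Z}f^n(D)$. By \remref{conjtrans}, $U$ is an open, $f$-invariant subset of $\R^2$, and there is a homeomorphism $h\colon U\to\R^2$ such that $hfh^{-1}$ is a translation of $\R^2$; exactly as in the proof of \propref{freetofoliation} we may take this translation to be $T_{1,0}$, the shift by one unit to the right.

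Next I would exploit that a translation visibly generates a covering space action. Let $V=\{\,p\in\R^2 : |p-h(x)|<\tfrac12\,\}$ be the open Euclidean ball of radius $\tfrac12$ about $h(x)$. Then $T_{1,0}^{\,n}(V)\cap V=\emptyset$ for every integer $n\neq0$, since $T_{1,0}^{\,n}(V)$ is the ball of radius $\tfrac12$ about $h(x)+(n,0)$ and distinct such centres are at distance $\geq 1$ apart. Now put $U_x=h^{-1}(V)$. Since $h$ is a homeomorphism onto $\R^2$ and $U$ is open in $\R^2$, the set $U_x$ is an open neighbourhood of $x$ in $\R^2$.

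Finally I would check the defining condition of \defref{csa}. Because $U$ is $f$-invariant, $f^n$ maps $U$ into $U$ and on $U$ we have $f^n=h^{-1}\circ T_{1,0}^{\,n}\circ h$ for all $n\in\Z$. Hence for $n\neq0$,
\[
f^n(U_x)\cap U_x
= h^{-1}\bigl(T_{1,0}^{\,n}(V)\bigr)\cap h^{-1}(V)
= h^{-1}\bigl(T_{1,0}^{\,n}(V)\cap V\bigr)
= h^{-1}(\emptyset)=\emptyset .
\]
As $n\neq0$ are precisely the non-identity elements of $\langle f\rangle$, this shows $\langle f\rangle$ is a covering space action on $\R^2$.

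I do not anticipate a genuine obstacle here: the entire difficulty is absorbed into the invocation of Brouwer's Plane Translation Theorem and \remref{conjtrans}. The only point that needs a word of care is that $U$ is in general a proper subset of $\R^2$, so one must note that $U_x$ is nevertheless open in $\R^2$ (which follows from $U$ being open), and that $f$-invariance of $U$ is what lets us pass freely between $f^n$ on $U_x$ and $T_{1,0}^{\,n}$ on $V$.
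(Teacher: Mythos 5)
Your proof is correct, but it takes a genuinely different route from the paper's. You localize at $x$ via Brouwer's Plane Translation Theorem (\thmref{brouwertrans}) and \remref{conjtrans}, conjugate $f$ on the saturated open set $U=\bigcup_n f^n(D)$ to the translation $T_{1,0}$, and then pull back a small ball that the translation visibly moves off itself; all the steps check out, including the two points you flag (openness of $U_x$ in $\R^2$ and the use of $f$-invariance of $U$ to identify $f^n$ with $h^{-1}T_{1,0}^{\,n}h$ on $U_x$). The paper instead argues more directly: by continuity and the absence of fixed points one finds a connected neighborhood $U_x$ with $f(U_x)\cap U_x=\emptyset$, and then invokes Andrea's non-recurrence lemma (\cite[Proposition 1.1]{andrea67}: if $K$ is compact connected and $f(K)\cap K=\emptyset$, then $f^n(K)\cap K=\emptyset$ for all $n$) applied to $\overline{U_x}$. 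Both arguments ultimately rest on nontrivial Brouwer theory, but the paper's input is the lighter ``free connected set'' lemma, whereas yours imports the full Plane Translation Theorem together with the conjugacy to a translation on the saturation of a translation domain; in exchange, your version makes the covering-space condition completely transparent once the translation model is in hand, since for $T_{1,0}$ the disjointness of all iterates of a small ball is immediate rather than delegated to a cited proposition.
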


\begin{proof}
	For any $x \in \R^2$, let $y$ denote $f(x)$. We know that $y \ne x$ because $f$ is a free mapping. Now we choose two neighborhoods $U_x'$ and $U_y$ of $x$ and $y$ respectively such that $U_x'\cap U_y=\emptyset$. Because $f$ is continuous, there is a neighborhood $U_x''$ of $x$ such that $f(U_x'') \subset U_y$. Denote the intersection $U_x' \cap U_x''$ by $U_x$. We get $f(U_x)\cap U_x=\emptyset$ because $f(U_x)\subset U_y$. By \cite[Proposition 1.1]{andrea67}, if $f(K)\cap K=\emptyset$, then $f^n(K)\cap K=\emptyset$ for all $n\in \mathbb{Z}$ and all compact connected subset $K$ of $\mathbb{R}^2$. We can assume $U_x$ is connected and then by replacing $K$ with the closure of $U_x$ we obtain $f^n(U_x)\cap U_x=\emptyset$ for all $n\in \mathbb{Z}$. Thus by \defref{csa}, $\langle f\rangle$ is a covering space action on $\R^2$.
\end{proof}

\begin{proposition}\label{homeoconj}
Let $f$ and $g$ be two free mappings such that $\R^2/\langle f\rangle$ and $\R^2/\langle g\rangle$ are homeomorphic. Then $f$ is conjugate to either $g$ or $g^{-1}$ in $\Homeo^{+}(\mathbb{R}^2)$.
\end{proposition}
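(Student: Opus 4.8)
The plan is a covering-space argument. First I would record the setup. A free mapping has infinite order (Brouwer's divergence property forces $f^{n}(x)\to\infty$, hence $f^{n}\neq\Id$ for $n\neq0$), so $\langle f\rangle\cong\langle g\rangle\cong\Z$; by \lemref{cover} these groups act on $\R^{2}$ as covering space actions, so the quotient maps $p_{f}\colon\R^{2}\to M_{f}:=\R^{2}/\langle f\rangle$ and $p_{g}\colon\R^{2}\to M_{g}:=\R^{2}/\langle g\rangle$ are covering maps. As $\R^{2}$ is simply connected, they are universal and regular, with deck groups exactly $\langle f\rangle$ and $\langle g\rangle$, so $M_{f},M_{g}$ are connected surfaces (possibly non-Hausdorff) with $\pi_{1}\cong\Z$. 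Pushing the standard orientation of $\R^{2}$ forward through $p_{f}$ and $p_{g}$ (legitimate since $f,g$ are orientation preserving) turns these covering maps into orientation-preserving local homeomorphisms.

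Given a homeomorphism $\Phi\colon M_{f}\to M_{g}$, I would first arrange $\Phi\in\Homeo^{+}$: if $\Phi$ reverses orientation, replace it by its composite with an orientation-reversing self-homeomorphism of $M_{f}$. Such a self-homeomorphism exists because, by \lemref{fiberbundle}, $M_{f}$ is an $\S^{1}$-bundle over $V_{f}$, and this bundle is trivial since $V_{f}$ is a simply connected $1$-manifold (when $f$ has finitely many fundamental regions this is immediate from \lemref{contractable}), so $M_{f}\simeq V_{f}\times\S^{1}$ carries the orientation-reversing homeomorphism $\mathrm{id}_{V_{f}}\times(z\mapsto\bar{z})$. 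Now, as $\R^{2}$ is simply connected, the lifting criterion yields a continuous lift $\tilde\Phi\colon\R^{2}\to\R^{2}$ of $\Phi\circ p_{f}$ through $p_{g}$, so $p_{g}\circ\tilde\Phi=\Phi\circ p_{f}$, and likewise a lift $\Psi\colon\R^{2}\to\R^{2}$ of $\Phi^{-1}\circ p_{g}$ through $p_{f}$. Then $\Psi\circ\tilde\Phi$ is a lift of $p_{f}$ through $p_{f}$, hence (the covering being regular) a deck transformation, i.e.\ $\Psi\tilde\Phi=f^{a}$ for some $a\in\Z$; symmetrically $\tilde\Phi\Psi=g^{b}$. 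Since $f^{a}$ and $g^{b}$ are invertible, $\tilde\Phi$ (and $\Psi$) are homeomorphisms of $\R^{2}$, and $\tilde\Phi\in\Homeo^{+}(\R^{2})$ because it is a lift of the orientation-preserving $\Phi$ along orientation-preserving local homeomorphisms.

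Finally I would read off the conjugacy. For each $n\in\Z$, $p_{g}\circ(\tilde\Phi f^{n}\tilde\Phi^{-1})=\Phi\circ p_{f}\circ f^{n}\circ\tilde\Phi^{-1}=\Phi\circ p_{f}\circ\tilde\Phi^{-1}=p_{g}$, so $\tilde\Phi f^{n}\tilde\Phi^{-1}$ is a deck transformation of $p_{g}$; thus $n\mapsto\tilde\Phi f^{n}\tilde\Phi^{-1}$ is a homomorphism $\Z\to\langle g\rangle$, say $\tilde\Phi f^{n}\tilde\Phi^{-1}=g^{kn}$. Applying the same argument to $\Psi$ gives $\Psi g\Psi^{-1}=f^{m}$ for some $m\in\Z$, and combining with $\Psi=f^{a}\tilde\Phi^{-1}$ gives $\tilde\Phi^{-1}g\tilde\Phi=f^{m}$; hence $g=\tilde\Phi f^{m}\tilde\Phi^{-1}=g^{km}$, and since $g$ has infinite order $km=1$, so $k=m\in\{1,-1\}$. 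Therefore $\tilde\Phi f\tilde\Phi^{-1}$ equals $g$ or $g^{-1}$ with $\tilde\Phi\in\Homeo^{+}(\R^{2})$, which is the assertion.

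I expect the main obstacle to be the orientation bookkeeping rather than the covering theory. A lift $\tilde\Phi$ is determined only up to composition with deck transformations, all of which preserve orientation, so one cannot repair orientation at the level of the lift; the correction must be made downstairs, which is why one needs an orientation-reversing self-homeomorphism of $M_{f}$, and hence the structural input that $M_{f}$ is a (trivial) circle bundle over $V_{f}$. The remaining points — that the various lifts are genuine homeomorphisms, and that the relevant composites of lifts of $\Phi$ and $\Phi^{-1}$ are deck transformations — are routine covering-space bookkeeping.
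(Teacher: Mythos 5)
Your argument is correct and follows essentially the same route as the paper's: both lift the homeomorphism of quotients through the universal covers to get $U\in\Homeo^{+}(\R^2)$ with $U\langle f\rangle U^{-1}=\langle g\rangle$, conclude $UfU^{-1}=g^{\pm1}$ since a generator of $\Z$ must map to a generator, and repair orientation downstairs using an orientation-reversing self-homeomorphism of the quotient supplied by the trivial $\S^1$-bundle structure over the leaf space. Your version is somewhat more explicit than the paper's about why the lift is a homeomorphism and why the induced map $\langle f\rangle\to\langle g\rangle$ is surjective, but the ideas are identical.
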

\begin{proof}
Let $P: \R^2\rightarrow \R^2/\langle f\rangle$ and $Q: \R^2\rightarrow \R^2/\langle g\rangle$ be two coverings with $\langle f\rangle$ and $\langle g\rangle$ being their corresponding covering groups (see \propref{cover}). Suppose that $h:\R^2/\langle f\rangle\rightarrow\R^2/\langle g\rangle$ is the given  homeomorphism. Then $h\circ P$ is a covering map with $\langle g\rangle$ being the covering group. Since $h\circ P$ and $Q$ are two covering maps for $\R^2/\langle g\rangle$ with the universal cover $\R^2$, then there exists a homeomorphism $U:\R^2\rightarrow \R^2$ such that the following diagram commutes:
\[\xymatrix{
	\R^2 \ar[r]^{U} \ar[d]_{P} & \R^2\ar[d]^{Q} \\
	\R^2/\langle f\rangle \ar[r]_{h} &\R^2/\langle g\rangle.
}\] 
That is we get,  
\begin{align}\label{eqn22}
Q\circ U=h\circ P.
\end{align}

\noindent
\textbf{Claim:} $Uf^nU^{-1}\in \langle g\rangle$ for all $n\in \Z$.

Note that $P\circ f=P$, then for all $n\in \Z$, we have 
\begin{align}\label{eqn21}
P\circ f^n=P.
\end{align}
\noindent
\textit{Proof of claim:} From Equations \eqref{eqn22} and \eqref{eqn21} we get the following: 
\begin{align*}
Q\circ(Uf^nU^{-1})&=(Q\circ U)\circ f^n\circ U^{-1}=(h\circ P)\circ f^n\circ U^{-1}\\
&=h\circ (P\circ f^n)\circ U^{-1}=h\circ P\circ U^{-1}=Q\circ U\circ U^{-1}=Q.
\end{align*}
Thus, $Uf^nU^{-1}$ is a covering transformation of $Q$ and hence the claim. 

Now consider the map $\Psi: \langle f\rangle\rightarrow \langle g\rangle$ given by $\Psi(f^n)=Uf^nU^{-1}$. Note that $\Psi$ is a group isomorphism. Under this isomorphism $\Psi(f)=UfU^{-1}$. So either $UfU^{-1}=g$ or $UfU^{-1}=g^{-1}$. 

Now, we will explain why we may assume that $U$ is orientation preserving. First note that $\R^2/\langle f \rangle$ and $\R^2/\langle g \rangle$ have canonical orientations coming from a fixed orientation on $\R^2$ and the two covering maps $P$ and $Q$. Moreover, these choice of orientations make the covering maps $P$ and $Q$ orientation preserving. Now, with respect to these orientations, $h$ is either orientation preserving or reversing. If $h$ is orientation reversing then we can compose it with an orientation reversing self-homeomorphism of $\R^2/\langle g \rangle$. Such an orientation reversing homeomorphism exists since $\R^2/\langle g \rangle$ is homeomorphic to $V_g\times \S^1$ there is an orientation reversing homeomorphism of $\S^1$. So, we can correctly assume that there is an orientation preserving homeomorphism $h: \R^2/\langle f \rangle\rightarrow \R^2/\langle g \rangle$. Finally, by the commutative diagram above, $U$ is orientation preserving. 
\end{proof}
Now we are ready to prove the main theorem of this paper.
 
\subsection{Proof of Theorem \ref{mainthm1}}
Lemma \ref{fiberbundle} shows that $\mathbb{R}^2/\langle f\rangle$ and $\mathbb{R}^2/\langle g\rangle$ are fiber bundles with $\mathbb{S}^1$ fibers over the leaf spaces $V_f$ and $V_g$ respectively. Since $f$ and $g$ both have finitely many fundamental regions, their leaf spaces $V_f$ and $V_g$ have finitely many edges. Therefore by Lemma \ref{contractable}, the leaf spaces $V_f$ and $V_g$ are contractible. Thus, $\mathbb{R}^2/\langle f\rangle$ and $\mathbb{R}^2/\langle g\rangle$ are homeomorphic to $V_f\times \mathbb{S}^1$ and $V_g\times \mathbb{S}^1$ respectively. Now suppose that the oriented plane foliations $F(f)$ and $F(g)$ are equivalent. Then by Haefliger-Reeb theory (see  \lemref{equivfoliations}) the corresponding leaf spaces $V_f$ and $V_g$ are homeomorphic, and hence $V_f\times \mathbb{S}^1$ and $V_g\times\mathbb{S}^1$ are also homeomorphic. Therefore we obtain that $\mathbb{R}^2/\langle f\rangle$ is homeomorphic to $\mathbb{R}^2/\langle g\rangle$ and hence by Proposition \ref{homeoconj} either $f$ is conjugate to $g$ or $f$ is conjugate to $g^{-1}$ in $\Homeo^{+}(\R^2)$.

To prove the converse, since $g$ and $g^{-1}$ have the same flowlines we may assume $g=hfh^{-1}$ where $h$ is an orientation preserving homeomorphism of $\R^2$. If $f$ is embedded in the flow $\{f^\rho\}$, then $hfh^{-1}$ is embedded in the flow $\{hf^\rho h^{-1}\}$. Therefore, $g$ is embedded in the flow $\{hf^\rho h^{-1}\}$ and by \lemref{darlinglemma} this means that $L_{h(x)}^g=\{hf^\rho h^{-1}(h(x)): \rho \in \R\}$. Since, $L_x^f=\{f^\rho(x): \rho \in \R\}$, we can obtain
$h(L_x^f)=h\{f^\rho(x): \rho \in \R\}=\{hf^\rho(x): \rho \in \R\}=\{hf^\rho h^{-1}(h(x)): \rho \in \R\}=L_{h(x)}^g$.
Therefore, $h(L_x^f)=L_{h(x)}^g$. Thus, by \defref{foliationequiv}, $F(f)$ and $F(g)$ are equivalent. 

\vspace{2mm}
\noindent
This completes the proof.

\qed

Our proof of \thmref{mainthm1} hinges on the contractibility of the leaf spaces $V_f$ and $V_g$ which has been obtained under the assumption that both $f$ and $g$ have finitely many fundamental regions. To the best of our knowledge the following is still open:

\begin{question}
Is the leaf space $V_f$ contractible for a free mapping $f$ with countably many fundamental regions?
\end{question}

An affirmative answer to the above question will lead to an immediate extension of our \thmref{mainthm1} to free mappings with countably many fundamental regions.


\bibliographystyle{alpha}

\end{document}